\shorttitle{One- versus multi-component regular variation and extremes of Markov trees} 
\definecolor{chocolate(traditional)}{rgb}{0.48, 0.25, 0.0}
\definecolor{burntorange}{rgb}{0.8, 0.33, 0.0}
\definecolor{ceruleanblue}{rgb}{0.16, 0.32, 0.75}
\newcommand{\js}[1]{\textcolor{chocolate(traditional)}{\sffamily\small [JS: {#1}]}}
\newcommand{\tree}{\mathcal{T}}
\newcommand{\law}{\mathcal{L}}
\renewcommand{\SS}{\mathbb{S}}
\newcommand{\Mz}{\mathcal{M}_0}
\newcommand{\Mzi}{\mathcal{M}_{0,i}}
\newcommand{\MzI}{\mathcal{M}_{0,I}}
\newcommand{\Czi}{\mathcal{C}_{0,i}}
\newcommand{\CzI}{\mathcal{C}_{0,I}}
\newcommand{\Szi}{\SS_{0,i}}
\newcommand{\SzI}{\SS_{0,I}}
\newcommand{\dto}{\raisebox{-0.5pt}{\,\scriptsize$\stackrel{\raisebox{-0.5pt}{\mbox{\scriptsize $d$}}}{\longrightarrow}$}\,}
\newcommand{\zto}{\raisebox{-0.5pt}{\,\scriptsize$\stackrel{\raisebox{-0.5pt}{\mbox{\scriptsize $0$}}}{\longrightarrow}$}\,}
\newcommand{\expec}{\operatorname{\mathbb{E}}}
\renewcommand{\Pr}{\operatorname{\mathbb{P}}}
\newcommand{\Fbar}{\overline{F}}
\newcommand{\1}{\mathds{1}}
\newcommand{\point}{\,\cdot\,}
\newcommand{\indep}{\perp\!\!\!\perp}
\newcommand{\abs}[1]{\lvert{#1}\rvert}
\newcommand{\reals}{\mathbb{R}}
\newcommand{\eps}{\varepsilon}
\newcommand{\diff}{\mathrm{d}}
\newcommand{\an}{\operatorname{an}}
\newcommand{\pa}{\operatorname{pa}}
\newcommand{\Pa}{\operatorname{Pa}}
\renewcommand{\ge}{\geqslant}
\renewcommand{\le}{\leqslant}
\begin{document}

\title{One- versus multi-component regular variation\\and extremes of Markov trees} 

\authorone[UCLouvain]{Johan Segers}
\addressone{UCLouvain, LIDAM/ISBA, Voie du Roman Pays 20, B-1348 Louvain-la-Neuve, Belgium. Email: johan.segers@uclouvain.be}

\begin{abstract}
	A Markov tree is a random vector indexed by the nodes of a tree whose distribution is determined by the distributions of pairs of neighbouring variables and a list of conditional independence relations.
	Upon an assumption on the tails of the Markov kernels associated to these pairs, the conditional distribution of the self-normalized random vector when the variable at the root of the tree tends to infinity converges weakly to a random vector of coupled random walks called tail tree. 
	If, in addition, the conditioning variable has a regularly varying tail, the Markov tree satisfies a form of one-component regular variation.
	Changing the location of the root, that is, changing the conditioning variable, yields a different tail tree.
	When the tails of the marginal distributions of the conditioning variables are balanced, these tail trees are connected by a formula that generalizes the time change formula for regularly varying stationary time series.
	The formula is most easily understood when the various one-component regular variation statements are tied up to a single multi-component statement.
	The theory of multi-component regular variation is worked out for general random vectors, not necessarily Markov trees, with an eye towards other models, graphical or otherwise.
\end{abstract}

\keywords{Conditional independence; graphical model; H\"usler--Reiss distribution; max-linear model; Markov tree; multivariate Pareto distribution; Pickands dependence function; regular variation; root change formula; tail measure; tail tree; time change formula.} 


\section{Introduction} 

Imagine a random vector $X = (X_1, \ldots, X_d)$ of nonnegative variables. One of the components, say $X_i$, is known to have exceeded a large threshold. How does this information affect the conditional distribution of the whole vector $X$? There could be a causal link from $X_i$ to the other variables $X_j$, perhaps via a network of dependence relations, so that tampering with $X_i$ would affect the whole system. Another possibility is that a large value of $X_i$ is merely the result of a large value of some other variable $X_j$. The latter event, however, could have consequences for still other variables $X_k$.

Depending on which one of the $d$ components is known to have been exceptionally large, the conditional distribution of $X$ is likely to be different. Still, if high values of two variables $X_i$ and $X_j$ are not unlikely to arrive together, the conditional distribution of $X$ given that $X_i$ is large must be connected to the one given that $X_j$ is large.

In this paper, these questions are studied for general random vectors using the language of regular variation. The answers are worked out for the particular case that $X$ is a Markov tree. A large value at a particular node is found to spread through the tree via independent increments along the edges. The joint limit distribution is the one of a vector of coupled geometric random walks. The couplings occur through the common edges of different paths starting at the same root node.

Graphical models, of which Markov trees are a special case, bring structure and sparsity to the web of dependence relations between many random variables \citep{LauritzenBook, wainwright+j:2008}. Extreme value theory for such models is a fairly recent subject. 
In \citep{asadi+d+s:2015}, a metric that takes the distance along a river into account underlies a spatial model for extremes of river networks. 
Recursive max-linear models on directed acyclic graphs are proposed in \citep{gissibl+k:2018} and put to work in \citep{einmahl+k+s:2018, gissibl+k+o:2018}.
In \citep{hitz+e:2016}, the density of a multivariate Pareto distribution is factorized through a version of the Hammersley--Clifford theorem. 
Such factorizations are also the theme in \citep{engelke+h:2018}, where they form the basis of new inference methods for extremes of graphical models, including the identification of the graphical structure itself.
Multivariate H\"usler--Reiss extreme-value copulas based on Gaussian Markov trees and higher-order truncated vines are introduced in \cite{lee+j:2018}, who propose composite likelihood methods based on bivariate margins to estimate the parameters.

Multivariate Pareto distributions arise as weak limits of normalized random vectors conditionally on the event that at least one component exceeds a high threshold. Although such conditioning events are covered by Theorem~\ref{thm:MPD:rho} below, the focus of this paper is rather on the case where the exceedance is known to have occurred at a specific variable. The message hinted at in the title is that both points of view are mathematically equivalent, but that, at least for Markov trees, the one-component limit is particularly elegant, as will be explained next.



\subsection{Tail tree of a Markov tree}

For a Markov chain, it was discovered in \citep{smith:1992} that, conditionally on the event that the series is large at some time instant, the conditional distribution of the future of the system is that of a random walk, a process called tail chain in \citep{perfekt:1994}. For light-tailed marginal distributions, this random walk is additive, and for heavy-tailed margins it is geometric, i.e., multiplicative, which is the convention used in this paper.

A Markov tree can be viewed as a coupled collection of Markov chains with common stretches. Take for instance the four-variate Markov tree in Figure~\ref{fig:MT:4}. The nodes of the tree are $\{1,2,3,4\}$ and the three pairs of neighbours are $\{1,2\}$, $\{2,3\}$ and $\{2,4\}$. The vector $(X_1, X_2, X_3)$ is a Markov chain, and so is $(X_1, X_2, X_4)$. These two chains are coupled via the common pair $(X_1, X_2)$. Conditionally on $X_2$, the variables $X_1$, $X_3$ and $X_4$ are independent, since any path that connects two of the three nodes~1, 3 and 4 passes through node~2. This conditional independence property together with the distributions of the three pairs $(X_1, X_2)$, $(X_2, X_3)$ and $(X_2, X_4)$ determines the joint distribution of $(X_1, X_2, X_3, X_4)$.

For the moment, assume that the four variables have the same, regularly varying tail function. The set-up involving regular variation will be further motivated in Section~\ref{subsec:rv}. The effect (not necessarily causal) on $X_2$ of a large value at $X_1$ is via a multiplicative increment $M_{1,2}$ whose distribution is equal to the weak limit of $X_2/X_1$ conditionally on $X_1 = t$ as $t \to \infty$. The existence of this limit is an assumption on the Markov kernel induced by the distribution of the pair $(X_1, X_2)$. Similarly, a large value at $X_2$ affects $X_3$ and $X_4$ via the increments $M_{2,3}$ and $M_{2,4}$, respectively. The effect of $X_1$ on $X_3$ is then through the composite increment $M_{1,2} M_{2,3}$, whereas on $X_4$ it is through $M_{1,2} M_{2,4}$. The conditional independence property ensures that the increments $M_{1,2}$, $M_{2,3}$ and $M_{2,4}$ are mutually independent. The common edge $(1, 2)$ on the paths from node~$1$ to node~$3$ and from node~$1$ to node~$4$ induces dependence between the two tail chains $(M_{1,2}, M_{1,2} M_{2,3})$ and $(M_{1,2}, M_{1,2} M_{2,4})$ via the common increment $M_{1,2}$. In this paper, the random vector 
\begin{equation}
\label{eq:tt}
(\Theta_{1,2}, \Theta_{1,3}, \Theta_{1,4}) 
= (M_{1,2}, M_{1,2} M_{2,3}, M_{1,2} M_{2,4}) 
\end{equation}
is called the \emph{tail tree} induced by $X$ with root at node $u=1$.

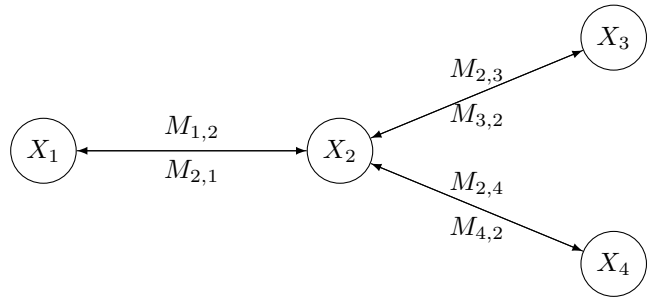
\begin{figure}
	\begin{center}
		\begin{tikzpicture}[scale=0.3]
		\node[draw, circle] (1) at (-13, 0) {$X_1$};
		\node[draw, circle] (2) at (0, 0) {$X_2$};
		\node[draw, circle] (3) at (12, 5) {$X_3$};
		\node[draw, circle] (4) at (12, -5) {$X_4$};
		
		\draw[->,>=latex] (1) -- (2) node[midway, above] {$M_{1,2}$};
		\draw[->,>=latex] (2) -- (3) node[midway, above] {$M_{2,3}$};
		\draw[->,>=latex] (2) -- (4) node[midway, above] {$M_{2,4}$};
		
		\draw[->,>=latex] (2) -- (1) node[midway, below] {$M_{2,1}$};
		\draw[->,>=latex] (3) -- (2) node[midway, below] {$M_{3,2}$};
		\draw[->,>=latex] (4) -- (2) node[midway, below] {$M_{4,2}$};
		\end{tikzpicture}
	\end{center}
	\caption{\label{fig:MT:4} A four-variate Markov tree $(X_1, \ldots, X_4)$ and the weak limits $M_{a,b}$ of $X_b/X_a$ given $X_a = t$ as $t \to \infty$ for edges $(a, b)$ in the tree. These limits constitute the multiplicative increments of the tail tree $\Theta_u$ starting at a given root, for instance at node $u = 1$ in \eqref{eq:tt} or at node $u = 3$ in \eqref{eq:tt:bis}.}
\end{figure}

The tail tree represents a network of stochastic dependence relations that are not necessarily causal. Suppose the Markov tree in Figure~\ref{fig:MT:4} represents water levels at four locations on a river network. If water flows from left to right, node~2 represents a point where the stream branches into two channels, as occurs for instance in a river delta. If water flows from right to left, however, node~2 represents the junction of two branches coming from nodes~3 and~4 into a larger stream flowing towards node~1. In the first case, the tail tree describes how a high water level at the upstream node~1 may cause high water levels at various locations in the delta further downstream. In the second, case however, it is nodes~3 and~4 that are situated upstream, and the tail tree models the sources of a high water volume at the downstream site~1. Still other set-ups are possible, such as for instance node~3 being upstream and nodes~1 and~4 being downstream: high water levels at nodes~1 and~4 are then related through a common cause at node~2, which can itself perhaps be traced back to node~3.

Whatever the causal relationships within $X$, it may make sense to change the conditioning variable. In Figure~\ref{fig:MT:4}, for instance, suppose it is known that a large value has occurred at node~3 rather than at node~1. Tracing the paths from node~3 to the three other nodes yields the tail tree with root at node $u = 3$:
\begin{equation}
\label{eq:tt:bis}
(\Theta_{3,2}, \Theta_{3,1}, \Theta_{3,4})
=
(M_{3,2}, M_{3,2}M_{2,1}, M_{3,2}M_{2,4}).
\end{equation}
The tail trees in \eqref{eq:tt} and~\eqref{eq:tt:bis} have a similar structure. The two edges on the path between the root nodes~1 and~3 have changed direction, however. The edge from node~2 to node~4 is common to both tail trees.

For each pair $\{a, b\}$ of neighbouring nodes, the choice of the root node~$u$ determines which of the two increments appears in the tail tree: $M_{a,b}$ from $X_a$ to $X_b$ or $M_{b,a}$ from $X_b$ to $X_a$. The distributions of $M_{a,b}$ and $M_{b,a}$ are connected by an expression that involves the marginal distributions of $X_a$ and $X_b$. For stationary and reversible Markov chains, this relation underlies a sufficiency property discovered in \citep{bortot+c:2000}. For tail chains of not necessarily reversible Markov chains, it was described in \citep{janssen+s:2014, segers:2007} and for tail processes of regularly varying stationary time series in \citep{basrak+s:2009} via the time change formula. This formula can be understood most easily  through the connection between the tail process and the tail measure \citep{dombry+h+s:2018, planinic+s:2018, samorodnitsky+o:2012}, and this is also the way in which the root change formula in Corollary~\ref{cor:modcons} below will be derived, but then without the assumption of stationarity and for general random vectors, not necessarily Markov trees.

\subsection{Regular variation}
\label{subsec:rv}

The language of regularly varying functions and measures provides a rich medium through which to express limit theorems. Recall that a positive, Lebesgue measurable function $f$ defined on a neighbourhood of infinity is regularly varying with index $\tau \in \reals$ if $\lim_{t \to \infty} f(\lambda t)/f(t) = \lambda^\tau$ for all $\lambda \in (0, \infty)$. If $X$ is a nonnegative random variable with unbounded support, cumulative distribution function $F(x) = \Pr(X \le x)$ and tail function $\Fbar = 1-F$, regular variation of $\Fbar$ with index $-\alpha < 0$ is equivalent to weak convergence of the conditional distribution of $X/t$ given that $X > t$ to a Pareto random variable $Y$ with index $\alpha$, i.e., $\Pr(Y > y) = y^{-\alpha}$ for all $y \in [1, \infty)$. We write $\law(X/t \mid X>t) \dto \Pa(\alpha)$ as $t \to \infty$, where $\law(Z\mid A)$ denotes the conditional distribution of the random object $Z$ given the event $A$, the arrow $\dto$ denotes convergence in distribution, and $\Pa(\alpha)$ denotes the said Pareto distribution.

For multivariate distributions, regular variation can be described via multivariate cumulative distribution functions as well, but an approach via convergence of Borel measures is more versatile. Let the state space be $\SS = [0, \infty)^d$. Generalizations to star-shaped metric spaces or abstract cones as in \citep{dombry+h+s:2018, HultLindskog2006, lindskog+r+r:2014, segers+z+m:2017} are left for further work. Let $I \subset \{1, \ldots, d\}$ denote the non-empty set of indices $i$ of variables of which the conditioning event $X_i > t$ is of possible interest. The marginal distributions of $X_i$ for $i \in I$ are assumed to be regularly varying and the ratios of their tail functions are assumed to converge to positive constants. This set-up is a bit more general than the one of identical margins and comes at little technical or notational cost.

The measures involved may have infinite mass but need to assign finite values to sets that remain bounded away from $\{ x \in \SS: \forall i \in I, x_i = 0 \}$ or $\{ x \in \SS : x_i = 0 \}$, depending on the conditioning event. The topology on the space of such measures will be the one proposed in \citep{lindskog+r+r:2014}, extending \citep{HultLindskog2006}, and resembles the one of vague convergence of measures, but avoiding the need to consider artificially compactified spaces. Regular variation is defined as convergence of $b(t) \, \Pr(X/t \in \point)$ to a limit measure called tail measure. Here, $b(t) > 0$ is a scale function tending to infinity and calibrated to the marginal distributions of $X_i$ for $i \in I$.

It is instructive to formulate statements in terms of weak convergence of distributions. For a high threshold $t$ tending to infinity and for a component $i \in I$, consider the asymptotic distribution of the rescaled random vector $X/t$ given that $X_i > t$. Decompose $X/t$ as $(X_i/t, X/X_i)$. Here, $X_i/t$ represents the overall level of $X$ with respect to $t$ whereas $X/X_i$ represents a self-normalized version of $X$. Convergence in distribution of $(X_i/t, X/X_i)$ given $X_i > t$ as $t \to \infty$ is a special case of what is called one-component regular variation in \citep{hitz+e:2016}, explored already in \citep{heffernan+r:2007, Resnick2014} for the bivariate case but allowing for affine normalizations. The random variable $X_i/t$ is asymptotically $\Pa(\alpha)$ distributed and independent of $X/X_i$, whose weak limit, denoted by $\Theta_i = (\Theta_{i,j})_{j=1}^d$, captures extremal dependence within $X$ given that $X_i$ is large. Letting the index $i$ run through $I$ produces multiple such one-component regular variation statements, which, together, are equivalent to what can be called multi-component regular variation. The limit distributions $\Theta_{i}$ that arise for various indices~$i$ must be mutually consistent, and the tail measure mentioned at the end of the previous paragraph embraces them all at once. 

In Section~\ref{sec:one2multi}, the focus is on tying together multiple one-component regular variation limits. The theory is worked out for general random vectors, not necessarily Markov trees. A number of results in that section have already been formulated in the literature in one way or another, in slightly different settings. Some of the equivalence relations in Theorem~\ref{thm:one2multi}, for instance, resemble those in~\citep[Theorem~1.4]{hitz+e:2016} and~\citep[Proposition~3.1]{segers+z+m:2017}. The model consistency property between limit measures in Theorem~\ref{thm:one2multi}(ii) is formulated in \citep[Section~2]{das+r:2011} for the bivariate case. The root change formula in Corollary~\ref{cor:rcf} extends the time change formula for regularly varying stationary time series stemming from \citep{basrak+s:2009} and studied extensively in \citep{dombry+h+s:2018, janssen:2018}. Multivariate Pareto distributions as in Theorem~\ref{thm:MPD:rho} are foreshadowed in \citep[Section~6.3]{resnick:2006} and appear in \citep{ferreira+dh:2014, rootzen+s+w:2018} when $\rho(x) = \max(x_1, \ldots, x_d)$ and in \citep{dombry+r:2015} for more general functionals $\rho$.
These are just a few connections, and the above list is by no means intended to be complete. 

The set-up involving regular variation is intended to serve two purposes. First, to model tail dependence within a vector of random variables which have been transformed to the same, heavy-tailed distribution, such as the unit-Fréchet distribution, as is common in multivariate extreme value theory. Second, to model the joint distribution of a vector of regularly varying random variables, not necessarily identically distributed, but with equivalent tails, such as returns on financial portfolios composed of the same basket of underlying assets. The latter framework is more general than the former and comes at little additional notational cost.

\subsection{Outline}

For a Markov tree $X$, convergence as $t \to \infty$ of the conditional distribution of $X/X_u$ given that $X_u = t$ is proved in Section~\ref{sec:tailtree}. The main assumption is that, for edges $e = (a, b)$ directed away from the root $u$, the conditional distribution of $X_b/X_a$ given $X_a = t$ converges as $t \to \infty$. No regular variation is needed yet.

The tail trees pertaining to different roots $u$ can be linked up thanks to the theory of one- and multi-component regular variation developed in Section~\ref{sec:one2multi}. The results do not rely on the Markov property and cover quite general random vectors $X$ on $[0, \infty)^d$, as is illustrated briefly for max-linear models. An interesting special case of these are the recursive max-linear structural equation models introduced in \citep{gissibl+k:2018}, featuring a causal structure induced by a directed acyclic graph. Most of the proofs of this section are deferred to the Appendix.

When combined, the results in Section~\ref{sec:tailtree} and~\ref{sec:one2multi} serve to uncover the regular variation properties of Markov trees in Section~\ref{sec:rvmt}. 
The common special case that the joint distribution of the Markov tree is absolutely continuous with respect to Lebesgue measure is the subject of Section~\ref{sec:ac}. The theory then simplifies considerably and the limit distribution with respect to a single root $u$ is already sufficient to reconstruct the limit distributions with respect to all other possible roots $\bar{u}$. 

In Sections~\ref{sec:rvmt} and~\ref{sec:ac}, the distributions of the increments of the tail trees are calculated in case the pair distributions are max-stable, not necessarily absolutely continuous. For the H\"usler--Reiss distribution max-stable distribution, the tail tree is multivariate log-normal, constructed from partial sums of independent normal random variables along the edges of the tree.

\section{The spectral tail tree of a Markov tree}
\label{sec:tailtree}

A (finite) graph is a pair $(V, E)$ where $V$ is a non-empty finite set of vertices or nodes and where $E \subset V \times V$ is a set of edges. Self-loops are excluded, i.e., $(u, u) \not\in E$ for all $u \in V$. To avoid trivialities, $V$ is assumed to have at least two elements. Two nodes are neighbours if they are joined by an edge. A graph is undirected if $(a, b) \in E$ implies $(b, a) \in E$. A path from a node $u$ to a node $v$ is a collection $\{e_1, \ldots, e_n\} \subset E$ of edges such that $e_k = (u_{k-1}, u_k)$ for all $k = 1, \ldots, n$, for $n+1$ \emph{distinct} nodes $u_0, u_1, \ldots, u_n \in V$ such that $u_0 = u$ and $u_n = v$. An undirected tree $\tree = (V, E)$ is an undirected graph such that for any pair of distinct nodes $u$ and $v$, there exists a unique path from $u$ to $v$, and this path is then denoted by $\path{u}{v}$.

Let $\tree = (V, E)$ be an undirected tree and let $X = (X_v)_{v \in V}$ be a random vector indexed by the nodes of the tree. The pair $(X, \tree)$ is a Markov tree if it satisfies the global Markov property \citep{LauritzenBook}: whenever $A, B, S$ are disjoint, non-empty subsets of $V$ such that $S$ separates $A$ and $B$ (i.e., any path between a node $a \in A$ and a node $b \in B$ passes through some node in $S$), the conditional independence relation
\begin{equation}
\label{eq:Markov}
X_A \indep X_B \mid X_S
\end{equation}
holds, where $X_W$ denotes the random vector $(X_v)_{v \in W}$ for $W \subset V$.

For an undirected tree $\tree = (V, E)$ and a node $u \in V$, let $\tree_u = (V, E_u)$ denote the directed, rooted tree that consists of directing the edges in $E$ outward starting from $u$. Formally, $E_u$ is the subset of $E$ that is obtained by choosing for every pair of edges $(a, b)$ and $(b, a)$ in $E$ the one such that the first node separates the second one from $u$. If $(a, b) \in E_u$, then $a$ is the (necessarily unique) parent of $b$ in $\tree_u$ whereas $b$ is a child of $a$ in $\tree_u$.

Let $(X, \tree)$ be a nonnegative Markov tree, where $\tree = (V, E)$ is an undirected tree.

\begin{condition}
	\label{ass:MT}
	There exists $u \in V$ with the following two properties.
	\begin{compactenum}[(i)]
		\item
		For every directed edge $e = (a, b) \in E_u$, there exists a version of the conditional distribution of $X_b$ given $X_a$ and a probability measure $\mu_e$ on $[0, \infty)$ such that
		\begin{equation}
		\label{eq:kernel:limit}
		\law( X_b / x_a \mid X_a = x_a ) \dto \mu_e, \qquad x_a \to \infty.
		\end{equation}
		\item
		For edges $e = (a, b) \in E_u$ such that $a \ne u$ and such that there exists an edge $\bar{e} \in \path{u}{a}$ for which $\mu_{\bar{e}}( \{0\} ) > 0$, we have
		\begin{equation}
		\label{eq:kernel:control}
		\forall \eta > 0, \qquad
		\lim_{\delta \downarrow 0}
		\limsup_{x \to \infty}
		\sup_{\eps \in [0, \delta]}
		\Pr( X_b / x > \eta \mid X_a = \eps x )
		= 0.
		\end{equation}
	\end{compactenum}
\end{condition}

Assumption~\ref{ass:MT}(ii) is similar to \citep[equation~(3.4)]{perfekt:1994} and prevents non-extreme values to cause extreme ones. A similar assumption is \citep[equation~(2.4)]{segers:2007}, where it is illustrated \citep[Example~7.5]{segers:2007} what can go wrong without it.

\begin{theorem}
	\label{thm:MT}
	Let $(X, \tree)$ be a nonnegative Markov tree on $\tree = (V, E)$. Assume Condition~\ref{ass:MT}. Let $(M_e : e \in E_u)$ be a vector of independent random variables such that the law of $M_e$ is $\mu_e$ for all $e \in E_u$. Then
	\begin{equation}
	\label{eq:spectral}
	\law ( X / X_u \mid X_u = t )
	\dto 
	\Theta_u = ( \Theta_{u,v} )_{v \in V},
	\qquad t \to \infty,
	\end{equation}
	where $\Theta_{u,u} = 1$ and
	\begin{equation}
	\label{eq:Thetauv}
	\forall v \in V \setminus \{u\}, \qquad 
	\Theta_{u,v} = \prod_{e \in \path{u}{v}} M_e.
	\end{equation}
\end{theorem}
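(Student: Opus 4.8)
The plan is to establish the joint convergence $\law((X_v/t)_{v\in V}\mid X_u=t)\dto(\Theta_{u,v})_{v\in V}$ --- which is what \eqref{eq:spectral} asserts, since $X/X_u=(X_v/t)_{v\in V}$ on the event $\{X_u=t\}$ --- by induction on $|V|$, stripping off one leaf of the rooted tree $\tree_u$ at a time. First I would fix, for every $e=(a,b)\in E_u$, a version of the kernel $x_a\mapsto\law(X_b\mid X_a=x_a)$ for which the convergences in Condition~\ref{ass:MT} hold, and recall the factorization of a Markov tree \citep{LauritzenBook}: conditionally on $X_u=t$, a version of $\law(X_V)$ is obtained by propagating these kernels outward along the edges of $\tree_u$. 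The structural fact that drives the induction is that, by the global Markov property, if $w$ is a leaf of $\tree_u$ with parent $a$, then $\{a\}$ separates $\{w\}$ from $V\setminus\{w\}$ in $\tree$, so that $\law(X_w/t\mid(X_v)_{v\ne w},X_u=t)=\law(X_w/t\mid X_a)$.

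The base case $V=\{u\}$ is immediate since $X_u/X_u=1=\Theta_{u,u}$. For the inductive step, pick a leaf $w\ne u$ of $\tree_u$ (one exists because $|V|\ge2$), let $a$ be its parent, set $V'=V\setminus\{w\}$ and $e_w=(a,w)\in E_u$. The vector $(X_v)_{v\in V'}$ is a Markov tree on $\tree$ with $w$ deleted and still satisfies Condition~\ref{ass:MT} at $u$, so the induction hypothesis gives $\law((X_v/t)_{v\in V'}\mid X_u=t)\dto(\Theta_{u,v})_{v\in V'}$. It then suffices to show, for all bounded continuous $g_v\colon[0,\infty)\to\reals$, that $\expec[\prod_{v\in V}g_v(X_v/t)\mid X_u=t]\to\expec[\prod_{v\in V}g_v(\Theta_{u,v})]$. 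Conditioning on $(X_v)_{v\in V'}$ and using the separation identity above rewrites the left-hand side as $\expec[\prod_{v\in V'}g_v(X_v/t)\,h_t(X_a/t)\mid X_u=t]$, where $h_t(y):=\expec[g_w(X_w/t)\mid X_a=ty]$; for $y>0$ this equals $\int g_w(yz)\,\tilde\mu_{e_w,ty}(\diff z)$ with $\tilde\mu_{e_w,s}:=\law(X_w/s\mid X_a=s)$.

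The heart of the argument is the \emph{continuous convergence} $h_t(y_t)\to\bar g_w(y):=\expec[g_w(y\,M_{e_w})]$ whenever $y_t\to y\in[0,\infty)$. For $y>0$ one has $ty_t\to\infty$, hence $\tilde\mu_{e_w,ty_t}\dto\mu_{e_w}$ by Condition~\ref{ass:MT}(i), and a routine estimate using tightness of $(\tilde\mu_{e_w,ty_t})_t$ and uniform continuity of $g_w$ on compacts absorbs the discrepancy between $g_w(y_t\,\cdot\,)$ and $g_w(y\,\cdot\,)$. For $y=0$ there are two cases: if every $\bar e\in\path{u}{a}$ has $\mu_{\bar e}(\{0\})=0$, then $\Theta_{u,a}>0$ almost surely, so the point $y=0$ lies outside a set of full limiting measure and is irrelevant to the extended continuous mapping theorem invoked below; otherwise some $\bar e\in\path{u}{a}$ has $\mu_{\bar e}(\{0\})>0$, and Condition~\ref{ass:MT}(ii) applied to $e_w$ (with $x=t$ and $\eps=y_t$) gives $\Pr(X_w/t>\eta\mid X_a=ty_t)\to0$ for every $\eta>0$, whence $h_t(y_t)\to g_w(0)=\bar g_w(0)$. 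Granting this, the extended continuous mapping theorem applied to the uniformly bounded, continuously convergent maps $z\mapsto\prod_{v\in V'}g_v(z_v)\,h_t(z_a)$, together with the induction hypothesis, yields convergence of the left-hand side to $\expec[\prod_{v\in V'}g_v(\Theta_{u,v})\,\bar g_w(\Theta_{u,a})]$. Finally, since $e_w$ occurs in no path $\path{u}{v}$ with $v\in V'$, the increment $M_{e_w}$ is independent of $(\Theta_{u,v})_{v\in V'}$; as $\Theta_{u,w}=\Theta_{u,a}\,M_{e_w}$ by \eqref{eq:Thetauv}, conditioning on $(\Theta_{u,v})_{v\in V'}$ turns $\bar g_w(\Theta_{u,a})$ into $\expec[g_w(\Theta_{u,w})\mid(\Theta_{u,v})_{v\in V'}]$, so the limit equals $\expec[\prod_{v\in V}g_v(\Theta_{u,v})]$, which closes the induction.

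The step I expect to be the main obstacle is the continuous-convergence lemma for $h_t$: it has to span the regime in which the parent value $X_a$ is of order $t$ and tends to infinity --- governed for the edge $e_w$ by Condition~\ref{ass:MT}(i) --- and the degenerate regime $X_a=o(t)$ (possibly $X_a$ bounded), which arises only when an ancestral increment can vanish and which is exactly what Condition~\ref{ass:MT}(ii) is designed to control, all while making sure the case split is exhaustive and that the randomness of $X_a=t\,(X_a/t)$ is handled legitimately through conditioning and the extended continuous mapping theorem rather than an illicit direct substitution. A more routine but still delicate point is the verification that the edge-kernel versions supplied by Condition~\ref{ass:MT} compose, via the Markov factorization, to a genuine version of $\law(X_V\mid X_u=t)$ for all large $t$.
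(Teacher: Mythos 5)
Your proof is correct, and its skeleton coincides with the paper's: induction on $\abs{V}$ by stripping a leaf $w$ of $\tree_u$, the global Markov property to isolate the kernel of the leaf given its parent $a$, the extended continuous mapping theorem played against the induction hypothesis, and Condition~\ref{ass:MT}(ii) invoked exactly in the regime where an ancestral increment can vanish. Where you genuinely depart is in the technical packaging. The paper tests against a single Lipschitz function $f$ of the whole vector and splits the conditional expectation with the indicators $\1(X_a/t \ge \delta)$ and $\1(X_a/t < \delta)$: the first piece is your $y>0$ regime and is handled by the extended continuous mapping theorem; the second is controlled by the Lipschitz bound $\abs{f(\ldots, X_w/t) - f(\ldots, 0)} \le \min(1, L X_w/t)$ together with Condition~(ii), and the limit $\delta \downarrow 0$ is taken at the very end. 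You instead test against products $\prod_v g_v$ of one-dimensional bounded continuous functions, which folds the leaf into a single map $h_t(y) = \expec[g_w(X_w/t) \mid X_a = ty]$ and reduces everything to continuous convergence of $h_t$ on $[0,\infty)$ (or on $(0,\infty)$ when $\Pr(\Theta_{u,a}=0)=0$), so that one application of the extended continuous mapping theorem suffices, with no $\delta$-truncation and no Lipschitz hypothesis on the test functions. The price is the standard but citable fact that products of bounded continuous coordinate functions form a convergence-determining class on $[0,\infty)^V$; the gain is a shorter treatment of the vanishing-parent regime, and your case split at $y=0$ aligns exactly with the hypothesis structure of Condition~\ref{ass:MT}(ii), since $\Pr(\Theta_{u,a}=0)>0$ if and only if $\mu_{\bar e}(\{0\})>0$ for some $\bar e \in \path{u}{a}$. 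The two worries you flag yourself are both resolved correctly: the case split is exhaustive for that reason, and substituting the random argument $X_a/t$ into $h_t$ is precisely what the extended continuous mapping theorem licenses, just as in the paper's Step~3.
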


The random vector $(\Theta_{u,v})_{v \in V}$ is called the \emph{tail tree} of the Markov tree $(X_v)_{v \in V}$, adapting terminology for Markov chains in \citep{perfekt:1994}. In Figure~\ref{fig:T}, the tail tree is illustrated for a tree with seven nodes. For subvectors $(\Theta_{u,w})_{w \in W}$ where all nodes in $W$ lie on the same path starting at $u$, the structure of the tail tree is that of a geometric random walk; take for instance $u = 1$ and $W = \{1, 4, 5, 7\}$ in Figure~\ref{fig:T}. The tail tree couples several geometric random walks together through the common edges in the underlying paths: in the same figure, consider for instance the vectors indexed by $\{1, 4, 5, 7\}$ and by $\{1, 4, 6\}$, respectively, which share the initial edge $(1, 4)$.

\begin{figure}
	\begin{center}
		\begin{tabular}{@{}p{0.7\textwidth}@{}p{0.25\textwidth}}
			\begin{minipage}[c]{0.7\textwidth}
				\begin{tikzpicture}[scale=0.8]
				\begin{scope}[ scale = 2 ]
				\node[draw, circle, fill=gray!30] (X1) at (-0.4,0) {\footnotesize $\Theta_{1,1}$};
				\node[draw, circle] (X2) at (-1.4,1) {\footnotesize $\Theta_{1,2}$};
				\node[draw, circle] (X3) at (-1.4,-1) {\footnotesize $\Theta_{1,3}$};
				\node[draw, circle] (X4) at (1,0) {\footnotesize $\Theta_{1,4}$};
				\node[draw, circle] (X5) at (2,1) {\footnotesize $\Theta_{1,5}$};
				\node[draw, circle] (X6) at (2,-1) {\footnotesize $\Theta_{1,6}$};
				\node[draw, circle] (X7) at (3.4,1) {\footnotesize $\Theta_{1,7}$};
				\end{scope}
				\draw[->,>=latex] (X1) -- (X2) node[near end, right] {\footnotesize $M_{1,2}$};
				\draw[->,>=latex] (X1) -- (X3) node[near end, right] {\footnotesize $M_{1,3}$};
				\draw[->,>=latex] (X1) -- (X4) node[midway, above] {\footnotesize $M_{1,4}$};
				\draw[->,>=latex] (X4) -- (X5) node[midway, left] {\footnotesize $M_{4,5}$};
				\draw[->,>=latex] (X4) -- (X6) node[midway, right] {\footnotesize $M_{4,6}$};
				\draw[->,>=latex] (X5) -- (X7) node[midway, above] {\footnotesize $M_{5,7}$};
				\end{tikzpicture}
			\end{minipage}
			&
			\begin{minipage}[c]{0.25\textwidth}
				\[ 
				\begin{array}{l}
				\Theta_{1,1} = 1 \\
				\Theta_{1,2} = M_{1,2} \\
				\Theta_{1,3} = M_{1,3} \\
				\Theta_{1,4} = M_{1,4} \\
				\Theta_{1,5} = M_{1,4} M_{4,5} \\
				\Theta_{1,6} = M_{1,4} M_{4,6} \\
				\Theta_{1,7} = M_{1,4} M_{4,5} M_{5,7}
				\end{array}
				\]
			\end{minipage}
		\end{tabular}
	\end{center}	
	\caption{\label{fig:T} Illustration of the tail tree in \eqref{eq:Thetauv}. To each edge $e = (a, b)$ in the tree rooted at $u = 1$, a random variable $M_{a,b}$ is associated. These variables are independent. The limit variable $\Theta_{u,v}$ is the product of the variables $M_e$ along the edges $e$ on the path from $u$ to $v$. The joint distribution of $\Theta_{u} = (\Theta_{u,v})_{v \in V}$ is the tail tree with root $u$.}
\end{figure}
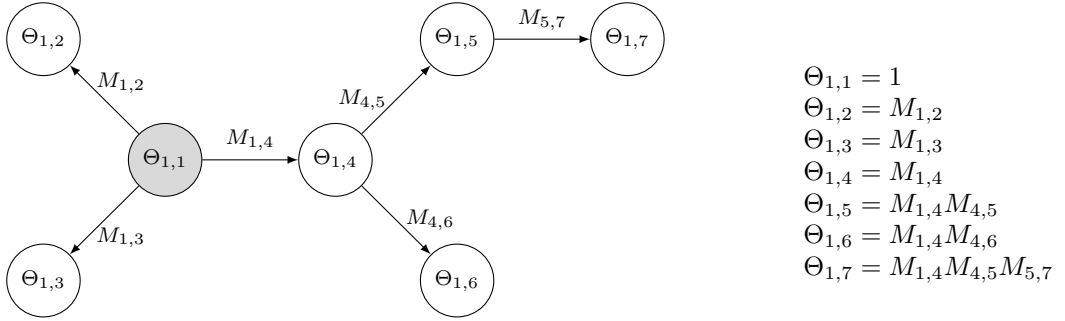

\begin{proof}[Proof of Theorem~\ref{thm:MT}]
	%
	Put $d = \abs{V} - 1 \ge 1$. The proof is by induction on $d$.
	
	If $V$ has only two elements, i.e., $d = 1$, then Condition~\ref{ass:MT}(i) already confirms the convergence stated in \eqref{eq:spectral} and \eqref{eq:Thetauv}. Therefore, we can henceforth assume that $V$ has at least three elements, i.e., $d \ge 2$. Identify $V$ with $\{0, 1, \ldots, d\}$ in such a way that the root is $u = 0$ and such that if $(a, b) \in E_u$ then $a < b$. Since $X_0/X_0 = 1 = \Theta_{0,0}$, we do not need to consider the components $X_0$ and $\Theta_{0,0}$ in \eqref{eq:spectral}. 
	\medskip{}
	
	\noindent\emph{Step 1.} ---
	Let $k$ denote the parent of $d$ in the directed tree $\tree_0$, that is, $k$ is the unique node in $\{0, 1, \ldots, d-1\}$ such that $(k, d)$ is an edge in $E_0$. Our way of numbering nodes implies that $d$ cannot be the parent of every other node. Condition~\ref{ass:MT} is then satisfied also for the nonnegative Markov tree $X_{0:(d-1)} = (X_0, X_1, \ldots, X_{d-1})$ on the tree that is obtained from $\tree$ by removing node $d$ from $V$ and edges $(k, d)$ and $(d, k)$ from $E$. The induction hypothesis then means that, for every bounded, continuous function $g : [0, \infty)^d \to \reals$, we have
	\[
		\lim_{t \to \infty} \expec[ g(X_{1:(d-1)}/t) \mid X_0 = t] = \expec[ g(\Theta_{0,1:(d-1)}) ],
	\]
	the joint distribution of $\Theta_{0,1:(d-1)} = (\Theta_{0,1}, \ldots, \Theta_{0,d-1})$ being given by \eqref{eq:Thetauv}.

	%
	
	Let $f : \reals^{d} \to [0, 1]$ be a Lipschitz function. We will show that
	\[
	\lim_{t \to \infty}
	\expec [ f( X_{1:d} / t ) \mid X_0 = t ]
	=
	\expec [ f( \Theta_{0,1:d}) ].
	\]
	Recall that $k \in \{0, 1, \ldots, d-1\}$ denotes the parent node of $d$. We need to distinguish between two cases: $k = 0$ is the root or $k \in \{1, \ldots, d-1\}$ is a non-root vertex. The case $k = 0$ is similar to but easier than the case $k \in \{1, \ldots, d-1\}$ and is left to the reader. We assume henceforth that $k \in \{1, \ldots, d-1\}$.
	\medskip{}
	
	\noindent\emph{Step 2.} ---
	Let $\delta > 0$ be such that $\Pr( \Theta_{0,k} = \delta ) = 0$. We have
	\begin{align}
	\label{eq:snoezie}
	\lefteqn{
		\left\lvert
		\expec[ f( X_{1:d}/t ) \mid X_0 = t ]
		-
		\expec[ f( \Theta_{0,1:d} ) ]
		\right\rvert
	} \\
	\label{eq:snoezie:up}
	&\le
	\left\lvert
	\expec[ \1( X_k/t \ge \delta) \, f( X_{1:d}/t ) \mid X_0 = t ]
	-
	\expec[ \1( \Theta_k \ge \delta) \, f( \Theta_{0,1:d} ) ]
	\right\rvert
	\\
	\label{eq:snoezie:down}
	&\quad\mbox{}+
	\left\lvert
	\expec[ \1( X_k/t < \delta) \, f( X_{1:d}/t ) \mid X_0 = t ]
	-
	\expec[ \1( \Theta_k < \delta ) \, f( \Theta_{0,1:d} ) ]
	\right\rvert.
	\end{align}
	We will show that the expression \eqref{eq:snoezie:up} converges to zero as $x_0 \to \infty$ (Step~3). Moreover, we will find a bound for the limit superior of \eqref{eq:snoezie:down} as $x \to \infty$. The bound will depend on $\delta$ but will converge to zero as $\delta \downarrow 0$ (Step~4). Together, these properties of \eqref{eq:snoezie:up} and \eqref{eq:snoezie:down} are sufficient to prove the theorem (Step~5).
	\medskip{}

	\noindent\emph{Step 3: The term \eqref{eq:snoezie:up}.} ---
	The vertex $k$ is the parent of $d$ in $\tree_0$, and therefore it separates $d$ from the other vertices. By the conditional independence property~\eqref{eq:Markov},
	\begin{multline*}
	\expec[ \1( \tfrac{X_k}{t} \ge \delta) \, f( \tfrac{X_{1:d}}{t} ) \mid X_0 = t ] \\
	=
	\int_{x_{1:(d-1)}}
	\1( \tfrac{x_k}{t} \ge \delta) \,
	\expec[ f( \tfrac{x_{1:(d-1)}}{t}, \tfrac{X_d}{t}) \mid X_k = x_k ]
	\Pr[ X_{1:(d-1)} \in \diff x_{1:(d-1)} \mid X_0 = t ].
	\end{multline*}
	To explain our notation: the integral is over $x_{1:(d-1)} = (x_1, \ldots, x_{d-1})$ and is with respect to the conditional distribution of $X_{1:(d-1)} = (X_1, \ldots, X_{d-1})$ given that $X_0 = t$. The integrand involves the conditional expectation of a function of $X_d$ given that $X_k = x_k$.
	
	We change variables and integrate with respect to the conditional distribution of $X_{1:(d-1)}/t$ given that $X_0 = t$: we get
	\begin{equation}
	\label{eq:bella:up}
	\expec[ \1( \tfrac{X_k}{t} \ge \delta) \, f( \tfrac{X_{1:d}}{t} ) \mid X_0 = t ] 
	=
	\int_{y_{1:(d-1)}}
	g_{t}(y_{1:(d-1)})
	\Pr[ \tfrac{X_{1:(d-1)}}{t} \in \diff y_{1:(d-1)} \mid X_0 = t ],
	\end{equation}
	where the integrand in \eqref{eq:bella:up} is given by
	\[
	g_{t}(y_{1:(d-1)})
	=
	\1(y_k \ge \delta) \expec[ f( y_{1:(d-1)}, y_k \tfrac{X_d}{t y_k}) \mid X_k = t y_k ].
	\]
	By Assumption~\ref{ass:MT}(i), we have $X_d / x_k \mid X_k = x_k \dto M_{k,d}$ as $x_k \to \infty$. Define
	\[
	g( y_{1:(d-1)} )
	=
	\1(y_k \ge \delta) \expec[ f( y_{1:(d-1)}, y_k M_{k,d} ) ].
	\]
	Recall that $f$ is bounded and (Lipschitz) continuous. By the extended continuous mapping theorem \citep[Theorem~18.11]{vandervaart:1998}, we have, for all vectors $y_{1:(d-1)}$ such that $y_k \ne \delta$ and for all functions $y_{1:(d-1)}(\point)$ such that $y_{1:(d-1)}(t) \to y_{1:(d-1)}$ as $t \to \infty$, the limit relation
	\[
	\lim_{t \to \infty} g_{t} \bigl( y_{1:(d-1)}(t) \bigr) = g( y_{1:(d-1)} ).
	\]
	Moreover, $\law(X_{1:(d-1)}/t \mid X_0 = t) \dto \law(\Theta_{1:(d-1)})$ as $t \to \infty$ by the induction hypothesis. By the same extended continuous mapping theorem, the integral \eqref{eq:bella:up} converges to
	\begin{multline*}
	\int_{y_{1:(d-1)}} g(y_{1:(d-1)}) \Pr[ \Theta_{1:(d-1)} \in \diff y_{1:(d-1)} ] \\
	= 
	\int_{y_{1:(d-1)}} 
	\1(y_k \ge \delta) 
	\expec[ f( y_{1:(d-1)}, y_k M_{k,d} ) ] 
	\Pr[ \Theta_{1:(d-1)} \in \diff y_{1:(d-1)} ].
	\end{multline*}
	Recall that $(M_e)_{e \in E_{u}}$ is a vector of independent random variables such that the law of $M_e$ is $\mu_e$ for $e \in E_u$. By construction, $M_{k,d}$ and $\Theta_{1:(d-1)}$ are then independent too: each component $\Theta_{0,j}$ of $\Theta_{1:(d-1)}$ is a product of random variables $M_{a,b}$ with $a, b \in \{0, \ldots, d-1\}$ and thus $e = (a, b) \neq (k, d)$. The above integral may therefore be simplified to
	\[
	\expec[ \1( \Theta_k \ge \delta ) \, f( \Theta_{1:(d-1)}, \Theta_k M_{k,d} ) ]
	= \expec[ \1( \Theta_k \ge \delta ) \, f( \Theta_{1:d} ) ]
	\]
	since $\Theta_d = \Theta_{k}M_{k,d}$. It follows that the limit of \eqref{eq:snoezie:up} as $t \to \infty$ is equal to zero.
	\medskip{}
	
	\noindent\emph{Step 4: The term \eqref{eq:snoezie:down}.} ---
	We consider two cases: $\Pr( \Theta_k = 0 ) = 0$ (Step~4.a) and $\Pr( \Theta_k = 0 ) > 0$ (Step~4.b).
	\medskip{}
	
	\noindent\emph{Step 4.a: The case $\Pr(\Theta_k = 0) = 0$.} ---
	Since $0 \le f \le 1$, the integral~\eqref{eq:snoezie:down} is bounded by
	\[
	\Pr( X_k/t < \delta \mid X_0 = t ) + \Pr( \Theta_k < \delta ).
	\]
	By the induction hypothesis, this sum converges to $2 \Pr( \Theta_k < \delta )$ as $x \to \infty$. The latter probability converges to zero as $\delta \downarrow 0$, as required.
	\medskip{}
	
	\noindent\emph{Step 4.b: The case $\Pr( \Theta_k = 0 ) > 0$.} ---
	We decompose \eqref{eq:snoezie:down} into three terms:
	\begin{align}
	\nonumber
	\lefteqn{
		\left\lvert
		\expec[ \1( \tfrac{X_k}{t} < \delta) \, f( \tfrac{X_{1:d}}{t} ) \mid X_0 = t ]
		-
		\expec[ \1( \Theta_k < \delta ) \, f( \Theta_{1:d} ) ]
		\right\rvert
	} \\
	&\le
	\label{eq:snoezie:down:1}
	\left\lvert
	\expec[ \1( \tfrac{X_k}{t} < \delta ) \, f( \tfrac{X_{1:d}}{t} ) \mid X_0 = t ]
	-
	\expec[ \1( \tfrac{X_k}{t} < \delta ) \, f( \tfrac{X_{1:(d-1)}}{t}, 0 ) \mid X_0 = t ]
	\right\rvert
	\\
	\label{eq:snoezie:down:2}
	&\quad\mbox{}
	+
	\left\lvert
	\expec[ \1( \tfrac{X_k}{t} < \delta ) \, f( \tfrac{X_{1:(d-1)}}{t}, 0 ) \mid X_0 = t ]
	-
	\expec[ \1( \Theta_k < \delta ) \, f( \Theta_{1:(d-1)}, 0 ) ]
	\right\rvert
	\\
	\label{eq:snoezie:down:3}
	&\quad\mbox{}
	+
	\left\lvert
	\expec[ \1( \Theta_k < \delta ) \, f( \Theta_{1:(d-1)}, 0 ) ]
	-
	\expec[ \1( \Theta_k < \delta ) \, f( \Theta_{1:d} ) ]
	\right\rvert.
	\end{align}
	Let $L > 0$ be such that $\abs{ f(y) - f(z) } \le L \sum_{j=1}^d \abs{y_j - z_j}$ for all $y, z \in \reals^d$. Furthermore, recall that $0 \le f \le 1$, so that also $\abs{ f(y) - f(z) } \le 1$ for all $y, z \in \reals^d$.
	\medskip{}
	
	\noindent\emph{Step 4.b.i: The term \eqref{eq:snoezie:down:1}.} ---
	The term \eqref{eq:snoezie:down:1} is bounded by
	\begin{equation}
	\label{eq:snoezie:down:1:0}
	\expec\left[ 
	\left.
	\1( \tfrac{X_k}{t} < \delta ) \, 
	\lvert f( \tfrac{X_{1:d}}{t} ) - f( \tfrac{X_{1:(d-1)}}{t}, 0 ) \rvert 
	\, \right| \,
	X_0 = t 
	\right]
	\le
	\expec[ \1( \tfrac{X_k}{t} < \delta ) \min(1, L \tfrac{X_d}{t}) \mid X_0 = t ].
	\end{equation}
	The node $k$ separates the nodes $0$ and $d$. By the global Markov property, the expectation on the right-hand side of \eqref{eq:snoezie:down:1:0} is therefore equal to
	\begin{equation}
	\label{eq:snoezie:down:1:aux}
	\int_{[0, \delta)} 
	\expec[ \min(1, L \tfrac{X_d}{t}) \mid X_k = \eps t ] \, 
	\Pr[ \tfrac{X_k}{t} \in \diff \eps \mid X_0 = t ].
	\end{equation}
	Let $\eta \in (0, 2/L)$. The conditional expectation in the integrand in \eqref{eq:snoezie:down:1:aux} satisfies
	\begin{align*}
	\expec[ \min(1, L X_d/t) \mid X_k = \eps t ]
	&=
	\expec[ 
	\min(1, L X_d/t) \, \1( X_d/t \le \eta) \mid X_k = \eps t 
	]\\
	&\quad\mbox{}
	+
	\expec[ 
	\min(1, L X_d/t) \, \1( X_d/t > \eta) \mid X_k = \eps t 
	]
	\\
	&\le
	L \eta + \Pr( X_d/t > \eta \mid X_k = \eps t ).
	\end{align*}
	Therefore, the integral in \eqref{eq:snoezie:down:1:aux} is bounded by
	\[
		L\eta + \sup_{\eps \in [0, \delta)} \Pr( X_d > \eta t \mid X_k = \eps t ).
	\]
	By Assumption~\ref{ass:MT}(ii), we can first take the limit superior as $t \to \infty$ and then the limit superior as $\delta \downarrow 0$ to find that
	\[
	\limsup_{\delta \downarrow 0} \limsup_{t \to \infty} \expec[ \1( X_k/t < \delta ) \min(1, LX_d/t) \mid X_0 = t ]
	\le L\eta.
	\]
	Since $\eta$ can be chosen arbitrarily close to zero, we find that the double limit superior above is equal to zero.
	\medskip{}
	
	\noindent\emph{Step 4.b.ii: The term \eqref{eq:snoezie:down:2}.} ---
	By the induction hypothesis, the term \eqref{eq:snoezie:down:2} converges to zero as $t \to \infty$.
	\medskip{}
	
	\noindent\emph{Step 4.b.iii: The term \eqref{eq:snoezie:down:3}.} ---
	Since $\Theta_d = \Theta_k M_{k,d}$, the term \eqref{eq:snoezie:down:3} is bounded by
	\[
	\expec[ \1( \Theta_k < \delta ) \, \lvert f( \Theta_{1:(d-1)}, 0 ) - f( \Theta_d ) \rvert ]
	\le
	\expec[ \1( \Theta_k < \delta ) \min( 1, L \Theta_k M_{k,d} ) ].
	\]
	By the dominated convergence theorem, the expectation on the right-hand side converges to zero as $\delta \downarrow 0$.
	\medskip{}
	
	\noindent\emph{Step 5.} ---
	The terms \eqref{eq:snoezie:up} and \eqref{eq:snoezie:down} were analyzed in Steps~3 and~4, respectively. In Step~3, it was shown that the term \eqref{eq:snoezie:up} converges to zero as $t \to \infty$, for any $\delta > 0$ such that $\Pr(\Theta_{0,k} = \delta) = 0$. In Step~4, it was shown that the limit superior as $t \to \infty$ of the term in \eqref{eq:snoezie:down} is bounded by a quantity depending on $\delta$ which converges to zero as $\delta \downarrow 0$. Since the expression in \eqref{eq:snoezie} does not depend on $\delta$, its limit as $t \to \infty$ must thus be zero.
	
	This completes the proof of the induction step and thus of the theorem.
\end{proof}

\begin{corollary}
	\label{cor:MT}
	In the setting of Theorem~\ref{thm:MT}, also $\law(X/X_u \mid X_u > t) \dto \Theta_u$ as $t \to \infty$.
\end{corollary}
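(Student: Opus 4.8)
The plan is to obtain Corollary~\ref{cor:MT} from Theorem~\ref{thm:MT} by a disintegration argument: the event $\{X_u > t\}$ is a mixture of the events $\{X_u = s\}$ over $s > t$, and since $\law(X/X_u \mid X_u = s)$ is already close to $\law(\Theta_u)$ once $s$ is large, the mixture $\law(X/X_u \mid X_u > t)$ inherits this.

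First I would record that $X_u$ has unbounded support — otherwise the limit relation~\eqref{eq:kernel:limit} along the edge issuing from $u$, and hence the conclusion of Theorem~\ref{thm:MT}, would be vacuous — so that $\Pr(X_u > t) > 0$ for every $t \ge 0$ and $\law(X/X_u \mid X_u > t)$ is well defined. Fix a regular conditional distribution $\kappa(s, \point)$ of $X/X_u$ given $X_u = s$; this is the object implicitly used in Theorem~\ref{thm:MT} (built in its proof from the transition kernels of the tree), whose conclusion states that $\kappa(s, \point) \dto \law(\Theta_u)$ as $s \to \infty$.

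Now let $g \colon [0, \infty)^V \to \reals$ be bounded and continuous, and put $G(s) = \int g \, \diff \kappa(s, \point)$ and $G_\infty = \expec[ g(\Theta_u) ]$. Then $G$ is bounded by $\sup \abs{g}$, and $G(s) \to G_\infty$ as $s \to \infty$, while the disintegration property of $\kappa$ gives, for all $t$,
\[
\expec[ g(X/X_u) \mid X_u > t ]
= \frac{1}{\Pr(X_u > t)} \int_{(t, \infty)} G(s) \, \Pr(X_u \in \diff s).
\]
Consequently
\[
\bigl\lvert \expec[ g(X/X_u) \mid X_u > t ] - G_\infty \bigr\rvert
\le \frac{1}{\Pr(X_u > t)} \int_{(t, \infty)} \bigl\lvert G(s) - G_\infty \bigr\rvert \, \Pr(X_u \in \diff s)
\le \sup_{s > t} \bigl\lvert G(s) - G_\infty \bigr\rvert,
\]
which tends to $0$ as $t \to \infty$. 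As $g$ ranges over the bounded continuous functions, this is exactly $\law(X/X_u \mid X_u > t) \dto \Theta_u$.

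I do not expect a genuine obstacle here; the argument is the standard passage from a ``$X_u = t$'' limit to a ``$X_u > t$'' limit. The only points deserving care are measure-theoretic: one must work with a bona fide regular conditional distribution so that the disintegration identity is legitimate, and one must note that the conditioning events have positive probability, which is the unbounded-support remark above. If one prefers not to name a version explicitly, the same computation can be run through $\expec[ \phi(X_u) \, \1(X_u > t) ]$ with $\phi(s) = \expec[ g(X/X_u) \mid X_u = s ] - \expec[ g(\Theta_u) ]$, a version of the conditional expectation that tends to $0$ at infinity by Theorem~\ref{thm:MT}.
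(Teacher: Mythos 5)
Your proposal is correct and is essentially the paper's own argument: both bound $\bigl\lvert \expec[g(X/X_u) \mid X_u > t] - \expec[g(\Theta_u)]\bigr\rvert$ by disintegrating over $\{X_u = s\}$ for $s > t$ and using Theorem~\ref{thm:MT} to make the integrand uniformly small for large $s$. Your additional remarks on fixing a regular conditional distribution and on the positivity of $\Pr(X_u > t)$ are sound points of care that the paper leaves implicit.
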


\begin{proof}
	For a bounded and continuous function $f : [0, \infty)^V \to \reals$, we have
	\begin{multline*}
	\bigl\lvert \expec[ f(X/X_u) \mid X_u > t ] - \expec[ f(\Theta_u)] \bigr\rvert \\
	\le
	\frac{1}{\Pr(X_u > t)}
	\int_{(t,\infty)}
	\bigl\lvert 
	\expec[ f(X/X_u) \mid X_u = s ] 
	- 
	\expec[f(\Theta_u)] 
	\bigr\rvert \, 
	\Pr(X_u \in \diff s).
	\end{multline*}
	Given $\eps > 0$, Theorem~\ref{thm:MT} allows us to find $t(\eps)$ sufficiently high such that the absolute value inside the integral is bounded by $\eps$ for all $s \ge t(\eps)$. But then the left-hand side in the previous display is bounded by $\eps$ too, for all $t \ge t(\eps)$. Since $\eps > 0$ was arbitrary, the stated convergence in distribution follows.
\end{proof}

\section{One- versus multi-component regular variation}
\label{sec:one2multi}

Let $X = (X_1, \ldots, X_d)$ be a random vector of nonnegative variables. Upon an obvious change in notation, Corollary~\ref{cor:MT} concerned weak convergence of $\law(X/X_i \mid X_i > t)$ as $t \to \infty$ for some $i \in \{1, \ldots, d\}$. This convergence plus regular variation of the marginal distribution of $X_i$ is a special case of what is called one-component regular variation in \citep{hitz+e:2016}. The weak limit, $\Theta_i = (\Theta_{i,j})_{j=1}^d$, depends on the choice of~$i$. There may be good reasons to consider these limits for several indices $i$. Let $I \subset \{1, \ldots, d\}$ be the set of all indices $i$ for which such a limit $\Theta_{i}$ exists. How are these random vectors $\Theta_{i}$ related?

In this section, several such one-component statements are combined into a single one which could be called multi-component regular variation. If $I = \{1,\ldots,d\}$, this is just ordinary multivariate regular variation. As discussed already in Section~\ref{subsec:rv}, the connections between the limits $\Theta_{i}$ generalize the time change formula for stationary regularly varying time series and can be deduced from their connections to a limiting tail measure.

Let $\SS = [0, \infty)^d$ for some positive integer $d$ and let $I \subset \{1,\ldots,d\}$ be non-empty. For $x \in \SS$, put $x_I = (x_i)_{i \in I}$. Define $\SzI = \{ x \in \SS : \max(x_I) > 0 \}$. Let $\MzI$ denote the collection of Borel measures $\nu$ on $\SzI$ with the property that $\nu(B)$ is finite for every Borel set $B$ of $\SzI$ that is contained in a set of the form $\{x \in \SS : \max(x_I) \ge \eps \}$ for some $\eps > 0$. Let $\mathcal{C}_{0,I}$ denote the collection of bounded, continuous functions $f : \SzI \to \reals$ for which there exists $\eps > 0$ such that $f(x) = 0$ as soon as $\max(x_I) \le \eps$. Let $\MzI$ be equipped with the smallest topology that makes the evaluation mappings $\nu \mapsto \nu(f) = \int f \, \diff \nu$ continuous, where $f$ ranges over $\mathcal{C}_{0,I}$. This is the notion of $\mathcal{M}_{\mathbb{O}}$ convergence in \citep{lindskog+r+r:2014}, with, in their notation, $\mathbb{C} = \{x \in [0, \infty)^d : \forall i \in I, x_i = 0 \}$ and $\mathbb{O} = \SS \setminus \mathbb{C} = \SzI$. The topology just defined is metrizable, turning $\MzI$ into complete, separable metric space, with convenient characterizations of relative compactness, a Portmanteau theorem, and a mapping theorem, all very much in the spirit of the notion of vague convergence of Borel measures on locally compact second countable Hausdorff spaces. Convergence of measures with respect to this topology is denoted by the arrow $\zto$. If $I$ is just a singleton, $\{i\}$ say, then notation is simplifed from $\SS_{0,\{i\}}$ to $\Szi$ and so on. 
For $\alpha > 0$, let $\Pa(\alpha)$ denote the Pareto distribution on $[1, \infty)$ with shape parameter $\alpha$, that is, the distribution of a random variable $Z$ such that $\Pr(Z > z) = z^{-\alpha}$ for $z \ge 1$. Product measure is denoted by $\otimes$.

\begin{theorem}
	\label{thm:one2multi}
	Let $X = (X_1, \ldots, X_d)$ be a random vector in $\SS = [0, \infty)^d$ and let $I \subset \{1,\ldots,d\}$ be non-empty. Let $F_i(x) = \Pr(X_i \le x)$ and $\Fbar_i = 1-F_i$ for $i \in I$ and $x \in [0, \infty)$. Assume there exists a function $b$, regularly varying at infinity with index $\alpha > 0$, such that $\lim_{t \to \infty} b(t) \Fbar_i(t) = c_i \in (0, \infty)$ for $i \in I$. The following statements are equivalent:
	\begin{compactenum}[(a)]
		\item
		For every $i \in I$ we have $\law(X/X_i \mid X_i > t) \dto \law(\Theta_i)$ as $t \to \infty$ for some random vector $\Theta_i = (\Theta_{i,j})_{j=1}^d$ on $\SS$.
		\item
		For every $i \in I$ we have $\law(X_i/t, X/X_i \mid X_i > t) \dto \Pa(\alpha) \otimes \law(\Theta_i)$ as $t \to \infty$ for some random vector $\Theta_i$ on $\SS$.
		\item 
		For every $i \in I$ we have $\law(X/t \mid X_i > t) \dto \law(Y_i)$ as $t \to \infty$ for some random vector $Y_i = (Y_{i,j})_{j=1}^d$ on $\SS$.
		\item
		For every $i \in I$ there exists $\nu_i \in \Mzi$ such that $b(t) \Pr(X/t \in \point) \zto \nu_i$ as $t \to \infty$ in $\Mzi$.
		\item
		There exists $\nu \in \MzI$ such that $b(t) \Pr(X/t \in \point) \zto \nu$ as $t \to \infty$ in $\MzI$.
	\end{compactenum}
	In that case, the limiting objects are connected in the following ways: for all $i \in I$,
	\begin{compactenum}[(i)]
		\item 
		$Y_i$ is equal in distribution to $Y_{i,i} \Theta_i$, where $\law(Y_{i,i}) = \Pa(\alpha)$ and $Y_{i,i}$ and $\Theta_i$ are independent;
		\item
		$\nu_i$ is equal to the restriction of $\nu$ to $\Szi$;
		\item
		$\Pr(Y_i \in \point) = c_i^{-1} \nu(\point \cap \{ x : x_i > 1 \})$;
		\item
		for every Borel measurable $f : \SzI \to [0, \infty]$, we have
		\begin{equation}
		\label{eq:Th2nu}
		\int_{\SzI} f(x) \, \1 \{ x_i > 0 \} \, \diff \nu(x) \\
		=
		c_i \expec\left[ 
		\int_{0}^{\infty} 
		f(z \Theta_i) \, 
		\alpha z^{-\alpha-1} \, \diff z 
		\right].
		\end{equation}		
	\end{compactenum}
\end{theorem}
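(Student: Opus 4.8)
\emph{Strategy.} The plan is to run the ring of equivalences $(a)\Leftrightarrow(b)\Leftrightarrow(c)\Leftrightarrow(d)\Leftrightarrow(e)$ and to read off the connection formulas $(i)$--$(iv)$ from the individual steps. Write $\mu_t = b(t)\,\Pr(X/t\in\point)$. The hypothesis that $b$ is regularly varying of index $\alpha$ with $b(t)\Fbar_i(t)\to c_i$ forces each $\Fbar_i$ to be regularly varying of index $-\alpha$ with $\Fbar_i(t)\sim c_i/b(t)$, so that for every $s>0$ one has $\Fbar_i(st)/\Fbar_i(t)\to s^{-\alpha}$ and $b(t)\Fbar_i(st)\to c_i s^{-\alpha}$; these facts will be used repeatedly.

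\emph{The equivalence of (a), (b), (c) and relation (i).} The elementary identity
\[
\Pr\bigl(X_i/t>s,\ X/X_i\in A \mid X_i>t\bigr)
= \frac{\Fbar_i(st)}{\Fbar_i(t)}\,\Pr\bigl(X/X_i\in A \mid X_i>st\bigr),
\qquad s\ge 1,
\]
combined with $\Fbar_i(st)/\Fbar_i(t)\to s^{-\alpha}$ and statement (a) read at the threshold $st\to\infty$, gives convergence of the left-hand side to $s^{-\alpha}\Pr(\Theta_i\in A)$ for $\law(\Theta_i)$-continuity sets $A$; this is exactly (b), with $\Theta_{i,i}=1$ a.s.\ because $X_i/X_i=1$. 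Pushing this weak limit forward by the continuous map $(r,\theta)\mapsto r\theta$ yields $\law(X/t\mid X_i>t)\dto\law(Y_{i,i}\Theta_i)$ with $\law(Y_{i,i})=\Pa(\alpha)$ independent of $\Theta_i$, i.e.\ (c) with $Y_i=Y_{i,i}\Theta_i$ --- this is relation $(i)$. Conversely $(b)\Rightarrow(a)$ by marginalization, and $(c)\Rightarrow(b)$ by first dividing to obtain $\law(X/X_i\mid X_i>t)\dto\law(Y_i/Y_{i,i})$ (legitimate since $Y_{i,i}\ge 1$ a.s.) and then invoking the displayed identity again.

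\emph{The equivalence of (c), (d), (e) and relations (ii)--(iv).} For $(c)\Rightarrow(d)$, fix $f\in\Czi$, so $f\equiv 0$ on $\{x_i\le\eps\}$ for some $\eps>0$; extend $f$ by zero to a bounded continuous function on $\SS$. Then $\mu_t(f)=b(t)\Fbar_i(\eps t)\cdot\expec[f(X/t)\mid X_i>\eps t]$, and using $b(t)\Fbar_i(\eps t)\to c_i\eps^{-\alpha}$ together with (c) at the threshold $\eps t$ (and rescaling by $\eps$) gives $\mu_t(f)\to c_i\eps^{-\alpha}\expec[f(\eps Y_i)]$. Inserting $Y_i=Y_{i,i}\Theta_i$ (the product structure $(i)$, already available) and the Pareto density, and using $f(z\Theta_i)=0$ for $z=(z\Theta_i)_i\le\eps$, this limit equals $\nu_i(f)$ with $\nu_i=c_i\int_0^\infty\law(z\Theta_i)\,\alpha z^{-\alpha-1}\,\diff z$, which one checks lies in $\Mzi$ since $\nu_i(\{x_i\ge\eps\})=c_i\eps^{-\alpha}<\infty$; hence $\mu_t\zto\nu_i$, and the identity $\nu_i(f)=c_i\expec[\int_0^\infty f(z\Theta_i)\alpha z^{-\alpha-1}\diff z]$ holds for all Borel $f\ge 0$ on $\Szi$, which together with $(ii)$ below is $(iv)$. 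For $(d)\Rightarrow(e)$ the measures $\nu_i$, $i\in I$, are glued: since the one sequence $\mu_t$ converges to $\nu_i$ in $\Mzi$ and to the analogous limit in $\mathcal{M}_{0,\{j\}}$, the two limits agree on Borel subsets of $\{x_i>0,\ x_j>0\}$ (test against sets bounded away from both coordinate hyperplanes), so there is a unique Borel $\nu$ on $\SzI=\bigcup_{i\in I}\Szi$ restricting to $\nu_i$ on each $\Szi$, namely $(ii)$; a short estimate gives $\nu\in\MzI$, and $\mu_t\zto\nu$ in $\MzI$ follows by writing any $f\in\CzI$ as $\sum_{i\in I}f\phi_i$ for a partition of unity $(\phi_i)_{i\in I}$ with $\sum_{i}\phi_i=1$ on the support of $f$ and $\phi_i$ vanishing off $\{x_i>\eps/2\}$, so that each $f\phi_i\in\Czi$. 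Finally $(e)\Rightarrow(a)$: restriction recovers (d), and normalizing, $\Pr(X/t\in\point\mid X_i>t)=\mu_t(\point\cap\{x_i>1\})/(b(t)\Fbar_i(t))$; since $\nu$ is finite on $\{x_i\ge 1\}$ with no mass on $\{x_i=1\}$ (its $i$-th marginal on $(0,\infty)$ being $c_i\alpha s^{-\alpha-1}\diff s$ by regular variation of $\Fbar_i$), the left-hand side converges weakly to $c_i^{-1}\nu(\point\cap\{x_i>1\})$ --- relation $(iii)$ --- which is of the form (c), and dividing by the $i$-th coordinate recovers (a).

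\emph{Main obstacle.} The genuinely delicate part is the bookkeeping around the $\Mz$-topology of \citep{lindskog+r+r:2014,HultLindskog2006}: that the restriction of an $\MzI$-limit to $\Szi$ is the corresponding $\Mzi$-limit, that the family $(\nu_i)_{i\in I}$ glues to a single $\nu\in\MzI$, and that $\CzI$-convergence may be localized through a subordinate partition of unity whose pieces $f\phi_i$ vanish on a whole neighbourhood of $\{x_i=0\}$, not merely at it; none of this is deep, but each step leans on the Portmanteau and mapping theorems of that theory. The other point requiring care is the asymptotic-independence argument in $(a)\Rightarrow(b)$, which is the classical heart of one-component regular variation and the only place where the regular-variation hypothesis on $b$ is truly exploited.
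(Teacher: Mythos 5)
Your proposal is correct and follows essentially the same path as the paper's proof: the identity relating $\Pr(\cdot\mid X_i>t)$ to $\Pr(\cdot\mid X_i>st)$ together with regular variation of $\Fbar_i$ drives (a)$\Leftrightarrow$(b), the continuous-mapping theorem drives (b)$\Leftrightarrow$(c), the Pareto-mixture construction $\nu_i=c_i\int_0^\infty\law(z\Theta_i)\,\alpha z^{-\alpha-1}\,\diff z$ and the Portmanteau theorem for $\Mz$-convergence drive the equivalences with (d), and the decomposition of a $\CzI$ test function into pieces each vanishing near one coordinate hyperplane drives (d)$\Leftrightarrow$(e), exactly as in the paper.

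The one place where you diverge mildly is (d)$\Rightarrow$(e). The paper first proves relative compactness of $\{b(t)\Pr(X/t\in\point)\}$ in $\MzI$ via the bound $b(t)\Pr(\max(X_I)/t>z)\le\sum_i b(t)\Fbar_i(zt)\to z^{-\alpha}\sum_i c_i$, and then shows uniqueness of subsequential limits by writing $f\in\CzI$ as a telescoping sum of $\Czi$-pieces built from the piecewise-linear cutoffs $h_\eps,\hbar_\eps$. You instead construct $\nu$ directly by gluing the $\nu_i$ along the overlaps $\{x_i>0,x_j>0\}$ (justified because $\mu_t$ has the same limit when tested against functions supported away from both hyperplanes), and then verify convergence using a partition of unity; the paper's telescoping decomposition is precisely one explicit such partition of unity. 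Your route dispenses with the compactness step but must check $\nu\in\MzI$ separately, which needs the same estimate; the two are essentially equivalent in work, and both rest on the same decomposition idea. All the connection formulas (i)--(iv) are read off as in the paper. No gaps.
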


The proof of Theorem~\ref{thm:one2multi}, together with the proofs of the other theorems in this section, is given in Appendix~\ref{app:proofs}.

To highlight the connection with the theory of one-component regular variation in \citep{hitz+e:2016}, note that the random vector $(X, \boldsymbol{Y})$ taking values in $[1, \infty) \times \reals^{d-1}$ in \citep[Theorem~1.4]{hitz+e:2016} plays the same role as the random vector $(X_i, X/X_i)$ in Theorem~\ref{thm:one2multi} above. The equivalence between (ii) and (iv) in the cited theorem is then the same as the equivalence between (a) and (b) in Theorem~\ref{thm:one2multi}.

Further, note that in Theorem~\ref{thm:one2multi}(a), for $j \in \{1,\ldots,d\}$ such that $\Theta_{i,j}$ is not degenerate at $0$, we necessarily have $\liminf_{t \to \infty} \Pr(X_j > t) / \Pr(X_i > t) > 0$, i.e., the tail of $X_j$ is at least as heavy as the one of $X_i$. If also $j \in I$, we can reverse the roles of $i$ and $j$ to find that the condition that the tails of all variables $X_i$ with $i \in I$ are balanced is almost forced.

Apart from the characterizations (a)--(e) in Theorem~\ref{thm:one2multi}, other equivalent ones are possible, for instance, involving sequences rather than functions, with a scaling function inside the probability rather than outside, or with respect to radial and `angular' coordinates $(\rho(X)/t, X/\rho(X))$ for some appropriate functional $\rho$. See for instance \citep[Theorem~6.1]{resnick:2006} and \citep[Theorem~3.1]{lindskog+r+r:2014}. The tail measures $\nu$ and $\nu_i$ are homogeneous with index $-\alpha$ \citep[Theorem~3.1]{lindskog+r+r:2014} and, upon a coordinate transformation, can be written as product measures. Since the focus here is on the weak limits $\Theta_i$, these properties are not further elaborated upon. Statement~(e) in Theorem~\ref{thm:one2multi} implies that the vector $X_I = (X_i)_{i \in I}$ is multivariate regularly varying with limit measure $\nu_I(\point) = \nu( \{ x \in [0, \infty)^d : x_I \in \point \})$ on $[0, \infty)^I \setminus \{0\}$, which in turn implies, among other things, that it is in the domain of attraction of a multivariate max-stable distribution with Fr\'echet margins and exponent measure $\nu_I$; see for instance \citep{resnick:1987, resnick:2006}. 

A noteworthy special case of \eqref{eq:Th2nu} is when $f$ is the indicator function of the orthant $\{ x \in \SzI : \forall j \in J, \, x_j > y_j \}$, where $J \subset \{1, \ldots, d\}$ has a non-empty intersection with $I$ and where $y_j > 0$ for all $j \in J$. If $i \in I \cap J$, then $f(x) \1 \{ x_i > 0 \} = f(x)$, and thus
\begin{equation}
\label{eq:min}
\nu(\{x \in \SzI : \forall j \in J, \, x_j > y_j \})
=
c_i \, \expec[ \min \{ y_j^{-\alpha} \Theta_{i,j}^\alpha : j \in J \}].
\end{equation}
A remarkable consequence is that the right-hand side does not depend on the choice of $i \in I \cap J$. This invariance property is a special case of a more general mutual consistency property of the limit distributions $\law(\Theta_i)$ for $i \in I$ that is formulated in Corollary~\ref{cor:modcons} below.


\begin{corollary}[Model consistency]
	\label{cor:modcons}
	If the equivalent conditions of Theorem~\ref{thm:one2multi} are fulfilled, then, for Borel measurable $f : \SzI \to [0, \infty]$ and for $i, j \in I$, we have
	\begin{equation}
	\label{eq:modcons}
	c_j \expec\left[ \1 \{\Theta_{j,i} > 0 \}\int_0^\infty f(z \Theta_j) \, \alpha z^{-\alpha-1} \, \diff z \right]
	=
	c_i \expec\left[ \1 \{\Theta_{i,j} > 0 \} \int_0^\infty f(z \Theta_i) \, \alpha z^{-\alpha-1} \, \diff z \right].
	\end{equation}
\end{corollary}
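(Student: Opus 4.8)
The plan is to derive Corollary~\ref{cor:modcons} directly from the key identity~\eqref{eq:Th2nu} in Theorem~\ref{thm:one2multi}(iv), applied twice --- once for the index $i$ and once for the index $j$ --- with a carefully chosen test function that suppresses the contribution from points lying on the coordinate hyperplane $\{x_i = 0\}$ or $\{x_j = 0\}$. More precisely, given a Borel measurable $f : \SzI \to [0,\infty]$, I would apply \eqref{eq:Th2nu} not to $f$ but to the function $g(x) = f(x) \, \1\{x_i > 0\} \, \1\{x_j > 0\}$. Since $g$ already carries the factor $\1\{x_i > 0\}$, the left-hand side of \eqref{eq:Th2nu} for the index $i$ reads $\int_{\SzI} g(x) \, \1\{x_i > 0\} \, \diff\nu(x) = \int_{\SzI} f(x) \, \1\{x_i>0\}\,\1\{x_j>0\} \, \diff\nu(x)$, and likewise, applying \eqref{eq:Th2nu} with the index $j$ to the same $g$, the left-hand side is $\int_{\SzI} f(x)\,\1\{x_i>0\}\,\1\{x_j>0\}\,\diff\nu(x)$ as well. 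The two left-hand sides are literally the same integral against the single measure $\nu$, so the right-hand sides must agree.

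The second step is to unwind those two right-hand sides. For the index $i$, the right-hand side of \eqref{eq:Th2nu} applied to $g$ is
\[
c_i \, \expec\!\left[ \int_0^\infty g(z\Theta_i)\, \alpha z^{-\alpha-1}\,\diff z \right]
= c_i \, \expec\!\left[ \1\{\Theta_{i,i}>0\}\,\1\{\Theta_{i,j}>0\} \int_0^\infty f(z\Theta_i)\, \alpha z^{-\alpha-1}\,\diff z \right],
\]
where I have used $g(z\Theta_i) = f(z\Theta_i)\,\1\{z\Theta_{i,i}>0\}\,\1\{z\Theta_{i,j}>0\}$ and $z>0$ to pull the indicators out of the integral. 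Now $\Theta_{i,i} = 1$ (recall $\law(X_i/X_i \mid X_i > t)$ is a point mass at $1$, so the $i$th component of $\Theta_i$ is $1$ almost surely), hence $\1\{\Theta_{i,i}>0\} = 1$ and this equals $c_i \, \expec[\1\{\Theta_{i,j}>0\} \int_0^\infty f(z\Theta_i)\,\alpha z^{-\alpha-1}\,\diff z]$, which is exactly the right-hand side of~\eqref{eq:modcons}. The symmetric computation with $i$ and $j$ interchanged gives the left-hand side of~\eqref{eq:modcons}. Equating the two yields the claim.

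I do not expect any serious obstacle here; this is a short deduction from the machinery already set up. The one point that needs a line of care is the justification that $g$ is an admissible test function for \eqref{eq:Th2nu} --- but \eqref{eq:Th2nu} is stated for \emph{arbitrary} Borel measurable $f : \SzI \to [0,\infty]$, and $g$ is manifestly Borel measurable and nonnegative, so nothing extra is needed. A second minor point is the interchange of the indicator $\1\{z\Theta_{i,j}>0\}$ with the integral over $z \in (0,\infty)$: this is valid because for $z > 0$ one has $z\Theta_{i,j} > 0$ iff $\Theta_{i,j} > 0$, so the indicator does not depend on $z$ and factors out trivially; the expectation over $\Theta_i$ and the $z$-integral can then be swapped by Tonelli since everything is nonnegative. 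Finally one uses $\Theta_{i,i} = 1$ and $\Theta_{j,j} = 1$ almost surely to drop the self-indicators, completing the argument.
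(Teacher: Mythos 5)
Your proof is correct and is essentially the paper's own argument: the paper likewise observes that by \eqref{eq:Th2nu} both sides of \eqref{eq:modcons} equal $\int_{\SzI} f(x)\,\1\{x_i>0,\,x_j>0\}\,\diff\nu(x)$. Your extra care with the test function $g$ and the fact that $\Theta_{i,i}=1$ almost surely just makes explicit what the paper leaves implicit.
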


\begin{proof}
	By \eqref{eq:Th2nu}, both sides in \eqref{eq:modcons} are equal to $\int_{\SzI} f(x) \, \1 \{ x_i > 0, x_j > 0 \} \, \diff \nu(x)$.
\end{proof}

\begin{corollary}[Root-change formula]
	\label{cor:rcf}
	If the equivalent conditions of Theorem~\ref{thm:one2multi} are fulfilled, then, for all Borel measurable $g : \SzI \to [0, \infty)$ and for all $i,j \in I$, we have
	\begin{equation}
	\label{eq:rcf1}
	c_{j} \, \expec[g(\Theta_j) \, \1 \{ \Theta_{j,i} > 0 \}]
	=
	c_{i} \, \expec[g(\Theta_{i} / \Theta_{i,j}) \, \Theta_{i,j}^{\alpha}].
	\end{equation}
	In particular, $\Pr[\Theta_{j,i} > 0] = 1$ if and only if $\expec[\Theta_{i,j}^\alpha] = c_j/c_i$, and then, for all $g$ as above,
	\begin{equation}
	\label{eq:rcf2}
	\expec[g(\Theta_j)]
	=
	\expec[g(\Theta_i / \Theta_{i,j}) \, \Theta_{i,j}^\alpha] / \expec[\Theta_{i,j}^\alpha].
	\end{equation}
\end{corollary}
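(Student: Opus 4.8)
The plan is to deduce the root-change formula directly from the measure-theoretic identity \eqref{eq:Th2nu} in Theorem~\ref{thm:one2multi}(iv), using the common tail measure $\nu \in \MzI$ as the hinge between the two roots $i$ and $j$. The starting point is that, by \eqref{eq:Th2nu} applied with the root $j$, for any Borel $h : \SzI \to [0,\infty]$ we have
\[
\int_{\SzI} h(x) \, \1\{x_j > 0\} \, \diff\nu(x)
=
c_j \, \expec\!\left[ \int_0^\infty h(z\Theta_j) \, \alpha z^{-\alpha-1} \, \diff z \right].
\]
Given $g : \SzI \to [0,\infty)$, I would apply this with the choice $h(x) = g(x/x_i) \, \1\{x_i > 0\}$, noting that $x \mapsto x/x_i$ is well-defined and Borel on $\{x_i > 0\}$; then the indicator $\1\{x_i > 0\}$ inside $h$ combines with $\1\{x_j > 0\}$ to give $\1\{x_i > 0, x_j > 0\}$ under the integral. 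On the right-hand side, $h(z\Theta_j) = g(z\Theta_j / (z\Theta_{j,i})) \, \1\{z\Theta_{j,i} > 0\} = g(\Theta_j/\Theta_{j,i}) \, \1\{\Theta_{j,i} > 0\}$ for $z > 0$ (the ratio $\Theta_j/\Theta_{j,i}$ has $i$-th coordinate equal to $1$, so this is just a reparametrized version of $\Theta_j$), and the $z$-integral of $\alpha z^{-\alpha-1}$ over $(0,\infty)$ is $1$. Thus the left-hand integral $\int_{\SzI} g(x/x_i) \, \1\{x_i > 0, x_j > 0\} \, \diff\nu(x)$ equals $c_j \, \expec[g(\Theta_j/\Theta_{j,i}) \, \1\{\Theta_{j,i} > 0\}]$; replacing $g$ by $g$ composed with multiplication and relabelling, one reaches the symmetric statement that this same $\nu$-integral equals $c_i \, \expec[g(\Theta_i/\Theta_{i,j}) \, \Theta_{i,j}^\alpha \, \1\{\Theta_{i,j} > 0\}]$ when expressed through root $i$ (see the next paragraph). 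Since $\Theta_{i,i} = 1$ a.s., when expressed through root $i$ with the \emph{other} normalization one recovers $c_j \, \expec[g(\Theta_j) \, \1\{\Theta_{j,i} > 0\}]$ on the left and the right-hand side of \eqref{eq:rcf1}, which is what we want; I will reconcile the bookkeeping below.

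More carefully: to get the right-hand side of \eqref{eq:rcf1} I would instead take $h(x) = g(x) \, \1\{x_i > 1\}$ is too crude; the clean route is to use root $j$ with $h(x) = g(x) \, \1\{x_i > 0\}$ directly, giving
\[
\int_{\SzI} g(x) \, \1\{x_i > 0, x_j > 0\} \, \diff\nu(x)
= c_j \, \expec\!\left[ \int_0^\infty g(z\Theta_j) \, \1\{\Theta_{j,i} > 0\} \, \alpha z^{-\alpha-1} \, \diff z \right],
\]
and then use root $i$ with $h(x) = g(x) \, \1\{x_j > 0\}$ to write the \emph{same} $\nu$-integral as $c_i \, \expec[\int_0^\infty g(z\Theta_i) \, \1\{\Theta_{i,j} > 0\} \, \alpha z^{-\alpha-1}\,\diff z]$. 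This is exactly Corollary~\ref{cor:modcons}, so \eqref{eq:rcf1} is a \emph{rewriting} of model consistency in the special case where $g$ is homogeneous-of-degree-zero composed with a spectral map. Concretely, I would take the instance of \eqref{eq:modcons} with the test function $x \mapsto g(x/x_j)$ (well-defined on $\{x_j > 0\}$): on the $j$-side, $g(z\Theta_j / (z\Theta_{j,j})) = g(\Theta_j)$ since $\Theta_{j,j}=1$, and the $z$-integral collapses to $1$, yielding $c_j \, \expec[g(\Theta_j)\,\1\{\Theta_{j,i}>0\}]$; on the $i$-side, $g(z\Theta_i/(z\Theta_{i,j})) = g(\Theta_i/\Theta_{i,j})$ is free of $z$, so the $z$-integral $\int_0^\infty \alpha z^{-\alpha-1}\1\{z\Theta_{i,j}>0\}\,\diff z$ is again $1$ on $\{\Theta_{i,j}>0\}$ — wait, that would drop the factor $\Theta_{i,j}^\alpha$. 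The fix is to apply \eqref{eq:modcons} with $x\mapsto g(x/x_i)$ instead, which is the map adapted to root $i$: then on the $i$-side $g(z\Theta_i/(z\Theta_{i,i})) = g(\Theta_i)$ and we get $c_i\,\expec[g(\Theta_i)\,\1\{\Theta_{i,j}>0\}]$, while on the $j$-side $g(z\Theta_j/(z\Theta_{j,i})) = g(\Theta_j/\Theta_{j,i})$; after a change of variable $z \mapsto z\Theta_{j,i}$ in the inner integral on $\{\Theta_{j,i}>0\}$ one picks up a Jacobian factor $\Theta_{j,i}^\alpha$ (using homogeneity of $\alpha z^{-\alpha-1}\,\diff z$), giving $c_j\,\expec[g(\Theta_j/\Theta_{j,i})\,\Theta_{j,i}^\alpha]$. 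Swapping the labels $i \leftrightarrow j$ in the resulting identity produces precisely \eqref{eq:rcf1}.

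For the second part, taking $g \equiv 1$ in \eqref{eq:rcf1} gives $c_j \, \Pr[\Theta_{j,i} > 0] = c_i \, \expec[\Theta_{i,j}^\alpha]$, so $\Pr[\Theta_{j,i}>0] = 1 \iff \expec[\Theta_{i,j}^\alpha] = c_j/c_i$; and when this holds, the indicator $\1\{\Theta_{j,i}>0\}$ on the left of \eqref{eq:rcf1} is a.s.\ equal to $1$, so dividing both sides by $c_j = c_i \, \expec[\Theta_{i,j}^\alpha]$ yields \eqref{eq:rcf2}. The only genuine subtlety — and the step I expect to be the main obstacle — is the change of variables in the inner integral together with the bookkeeping of the indicators $\1\{\Theta_{i,j}>0\}$ versus the factors $\Theta_{i,j}^\alpha$: one must verify that on the event $\{\Theta_{i,j}=0\}$ both the ratio $\Theta_i/\Theta_{i,j}$ and the factor $\Theta_{i,j}^\alpha$ are handled consistently (the factor being $0$ there, so the ill-defined ratio is harmlessly multiplied away), and that the substitution $z \mapsto z/\Theta_{i,j}$ is applied pathwise under the conditional expectation given $\Theta_i$, which is legitimate since $\Theta_{i,j}$ is $\Theta_i$-measurable. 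Everything else is a direct consequence of \eqref{eq:Th2nu} and the homogeneity of the Pareto integrator $\alpha z^{-\alpha-1}\,\diff z$.
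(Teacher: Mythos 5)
Your overall plan — specialize the model-consistency identity~\eqref{eq:modcons} to a test function of the form $g(x/x_\bullet)$ and let the Pareto $z$-integral collapse on one side and produce a $\Theta^\alpha$-factor on the other, then take $g\equiv 1$ for the second part — is exactly the route the paper takes. But the calculation you carry out has a genuine gap: you drop the cutoff indicator from the test function, and without it the $z$-integrals on \emph{both} sides of~\eqref{eq:modcons} are infinite, so the identity is vacuous.

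Concretely, with your final choice $f(x) = g(x/x_i)$ one has on the $i$-side $f(z\Theta_i) = g(\Theta_i)$ for \emph{every} $z>0$ (since $\Theta_{i,i}=1$), hence $\int_0^\infty f(z\Theta_i)\,\alpha z^{-\alpha-1}\,\diff z = g(\Theta_i)\int_0^\infty \alpha z^{-\alpha-1}\,\diff z = \infty$ whenever $g(\Theta_i)>0$; the same divergence occurs on the $j$-side. Your proposed fix — a change of variable $z\mapsto z\Theta_{j,i}$ to "pick up a Jacobian factor $\Theta_{j,i}^\alpha$" — cannot rescue this: the substitution is a bijection of $(0,\infty)$ onto itself and leaves the divergent integral divergent; there is no finite quantity from which a Jacobian factor could emerge. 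The factor $\Theta_{j,i}^\alpha$ actually comes from a \emph{cutoff}, not a substitution. The correct choice is $f(x) = g(x/x_j)\,\1\{x_j > 1\}$ (the paper's choice), or equivalently your root-$i$ variant $f(x) = g(x/x_i)\,\1\{x_i > 1\}$ followed by swapping labels. With the cutoff in place, on the "own" root side the $z$-integral is $\int_1^\infty\alpha z^{-\alpha-1}\,\diff z = 1$, while on the other side the condition $\1\{z\Theta_{j,i}>1\}$ truncates the integral to $z>1/\Theta_{j,i}$ on $\{\Theta_{j,i}>0\}$, giving $\int_{1/\Theta_{j,i}}^\infty \alpha z^{-\alpha-1}\,\diff z = \Theta_{j,i}^\alpha$. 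This is where the power of $\Theta$ enters; it is not a Jacobian. The remainder of your proof — taking $g\equiv 1$ to get $c_j\Pr(\Theta_{j,i}>0)=c_i\expec[\Theta_{i,j}^\alpha]$ and then dropping the a.s.-trivial indicator to obtain~\eqref{eq:rcf2} — is correct and matches the paper.
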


\begin{proof}
	In Corollary~\ref{cor:modcons}, take $f(x) = g(x/x_j) \, \1 \{ x_j > 1 \}$. As $\Theta_{j,j} = 1$, the left-hand side of \eqref{eq:modcons} is $c_j \expec[\1\{\Theta_{j,i} > 0 \} g(\Theta_{j})]$. The right-hand side of \eqref{eq:modcons} becomes $c_i \expec[ \1 \{ \Theta_{i,j} > 0 \} g(\Theta_i/\Theta_{i,j}) \Theta_{i,j}^\alpha]$, in which the indicator function is redundant. 
	
	The special case $g \equiv 1$ in \eqref{eq:rcf1} yields $c_j \Pr(\Theta_{j,i} > 0) = c_i \, \expec[\Theta_{i,j}^\alpha]$. If $\expec[\Theta_{i,j}^\alpha] = c_j/c_i$, we have $\Pr(\Theta_{j,i}>0)=1$, and the indicator function on the left-hand side of \eqref{eq:rcf1} can be omitted.
\end{proof}

If the limit measure $\nu$ does not assign any mass to the coordinate hyperplane $\{x : x_i = 0\}$, the indicator in \eqref{eq:Th2nu} is redundant and $\nu$ can be expressed entirely in terms of $\law(\Theta_i)$. Moreover, whether this occurs or not can be read off from the $\alpha$-th moments of the components of $\Theta_i$.

\begin{corollary}
	\label{cor:justk}
	In Theorem~\ref{thm:one2multi}, we have, for $i \in I$,
	\begin{equation}
	\label{eq:nuxi0}
	\nu(\{x \in \SzI : x_i = 0 \})
	=
	\begin{cases}
	0 & \text{if $\expec[\Theta_{i,j}^\alpha] = c_j/c_i$ for all $j \in I$,} \\
	\infty & \text{otherwise}.
	\end{cases}		
	\end{equation}
	If $\nu(\{x : x_i = 0\}) = 0$ for some $i \in I$, then, for all Borel measurable $f : \SzI \to [0, \infty]$,
	\begin{equation}
	\label{eq:Thk2nu}
	\nu(f) = c_i \expec\left[ \int_0^\infty f(z \Theta_i) \, \alpha z^{-\alpha-1} \, \diff z \right].
	\end{equation}
	Moreover, all tail trees $\Theta_{j}$ for $j \in I$ are determined by $\Theta_{i}$ via \eqref{eq:rcf2}.
\end{corollary}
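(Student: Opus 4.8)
The plan is to prove the three assertions of Corollary~\ref{cor:justk} in order, with the dichotomy \eqref{eq:nuxi0} doing the real work and the last sentence following from it via Corollary~\ref{cor:rcf}. The only tool needed is the identity \eqref{eq:Th2nu} of Theorem~\ref{thm:one2multi}, applied to a handful of indicator test functions.

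I would dispose of \eqref{eq:Thk2nu} first. If $\nu(\{x \in \SzI : x_i = 0\}) = 0$, then $\{x : x_i > 0\}$ has full $\nu$-measure, so the factor $\1\{x_i > 0\}$ on the left-hand side of \eqref{eq:Th2nu} is $\nu$-almost everywhere equal to $1$ and may be omitted; this turns \eqref{eq:Th2nu} into \eqref{eq:Thk2nu} for every Borel measurable $f : \SzI \to [0, \infty]$.

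For the dichotomy, I would fix $i \in I$ and, for $j \in I \setminus \{i\}$ and $y > 0$, compute the $\nu$-mass of the slice $\{x : x_i = 0, \, x_j > y\}$. Since $j \in I$, the set $\{x : x_j > y\}$ already lies in $\SzI$. Taking $f = \1\{x_j > y\}$ in \eqref{eq:Th2nu} with the index $j$ in place of $i$, and using $\Theta_{j,j} = 1$, yields $\nu(\{x_j > y\}) = c_j \int_y^\infty \alpha z^{-\alpha-1}\, \diff z = c_j y^{-\alpha}$, in particular finite. Taking the same $f$ in \eqref{eq:Th2nu} with the index $i$ yields $\nu(\{x : x_i > 0, \, x_j > y\}) = c_i \, \expec\bigl[ \int_0^\infty \1\{z \Theta_{i,j} > y\}\, \alpha z^{-\alpha-1}\, \diff z \bigr] = c_i y^{-\alpha}\, \expec[\Theta_{i,j}^\alpha]$, the inner integral being $y^{-\alpha}\Theta_{i,j}^\alpha$ on $\{\Theta_{i,j} > 0\}$ and $0$ elsewhere. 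Subtracting gives $\nu(\{x : x_i = 0, \, x_j > y\}) = y^{-\alpha}\bigl( c_j - c_i\, \expec[\Theta_{i,j}^\alpha] \bigr)$, a nonnegative quantity for each $y > 0$. Letting $y \downarrow 0$ and using upward continuity of $\nu$ along $\{x_j > y\} \uparrow \{x_j > 0\}$ shows that $\nu(\{x : x_i = 0, \, x_j > 0\})$ equals $0$ when $\expec[\Theta_{i,j}^\alpha] = c_j/c_i$ and equals $+\infty$ otherwise. Since $\{x \in \SzI : x_i = 0\} = \bigcup_{j \in I \setminus \{i\}} \{x : x_i = 0, \, x_j > 0\}$ (an $x \in \SzI$ with $x_i = 0$ has some positive coordinate $x_j$ with $j \in I \setminus \{i\}$) and the omitted case $j = i$ is vacuous because $\Theta_{i,i} = 1$, formula \eqref{eq:nuxi0} follows.

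Finally, if $\nu(\{x : x_i = 0\}) = 0$ for some $i \in I$, then \eqref{eq:nuxi0} forces $\expec[\Theta_{i,j}^\alpha] = c_j/c_i$ for every $j \in I$, so the ``in particular'' part of Corollary~\ref{cor:rcf} gives $\Pr[\Theta_{j,i} > 0] = 1$ together with the identity \eqref{eq:rcf2}; letting $g$ run over bounded continuous functions determines $\law(\Theta_j)$ from $\law(\Theta_i)$ for every $j \in I$. I do not expect any genuine obstacle here; the only points that need care are checking that the indicators $\1\{x_j > y\}$ are admissible in \eqref{eq:Th2nu} and supported within $\SzI$, and noting that $\nu(\{x_j > y\})$ is finite so that the subtraction yielding $\nu(\{x : x_i = 0, \, x_j > y\})$ is meaningful — both of which are consequences of \eqref{eq:Th2nu} itself.
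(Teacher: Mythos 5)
Your proof is correct, and it takes a genuinely different route to \eqref{eq:nuxi0} than the paper. The paper applies \eqref{eq:Th2nu} once, with $f = \1\{x : x_i = 0\}$ and index $j$: the inner integral $\int_0^\infty \1\{z\Theta_{j,i} = 0\}\,\alpha z^{-\alpha-1}\,\diff z$ is either $0$ or $+\infty$ depending on whether $\Theta_{j,i} > 0$, so $\nu(\{x : x_i = 0, x_j > 0\})$ is $0$ precisely when $\Pr(\Theta_{j,i} = 0) = 0$, and the root-change formula \eqref{eq:rcf1} with $g \equiv 1$ is then invoked to recast $\Pr(\Theta_{j,i} = 0) = 0$ as $\expec[\Theta_{i,j}^\alpha] = c_j/c_i$. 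You instead stay with finite quantities by using $f = \1\{x_j > y\}$ with $y > 0$, apply \eqref{eq:Th2nu} with both indices $j$ and $i$, subtract, and let $y \downarrow 0$; this sidesteps \eqref{eq:rcf1} entirely in the proof of the dichotomy and yields the explicit identity $\nu(\{x : x_i = 0,\, x_j > y\}) = y^{-\alpha}\bigl(c_j - c_i\,\expec[\Theta_{i,j}^\alpha]\bigr)$, which in passing shows $\expec[\Theta_{i,j}^\alpha] \le c_j/c_i$ always, a fact left implicit in the paper. The only notational point worth tightening is that your subtraction is legitimate because $\nu(\{x_j > y\}) = c_j y^{-\alpha} < \infty$, which also forces $\expec[\Theta_{i,j}^\alpha] < \infty$; you gesture at this but it could be stated outright. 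Both you and the paper still need Corollary~\ref{cor:rcf} for the closing sentence about recovering $\Theta_j$ from $\Theta_i$, so your approach is more self-contained only for the dichotomy itself.
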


\begin{proof}
	Choose $i, j \in I$. In \eqref{eq:Th2nu}, let $f$ be the indicator function of the set $\{ x : x_i = 0 \}$ and let the index $i$ in \eqref{eq:Th2nu} be equal to the index $j$ chosen here.
	It follows that $\nu(\{x \in \SzI : x_i = 0, x_j > 0 \})$ is zero if $\Pr(\Theta_{j,i} = 0) = 0$ and infinity otherwise.
	By \eqref{eq:rcf1} with $g \equiv 1$, we have $\Pr(\Theta_{j,i} = 0) = 0$ if and only if $\Pr(\Theta_{j,i} > 0) = 1$ if and only if $\expec[\Theta_{i,j}^\alpha] = c_j / c_i$. Equation~\eqref{eq:nuxi0} follows. If $\nu(\{x : x_i = 0\}) = 0$, we can omit the indicator $\1 \{ x_i > 0 \}$ on the left-hand side of \eqref{eq:Th2nu}, yielding~\eqref{eq:Thk2nu}.
\end{proof}

Let $f$ be the indicator function of the set $\{ x : \exists j \in J, \, x_j > y_j \}$, where $J \subset \{1,\ldots,d\}$ is non-empty and where $y_j > 0$ for all $j \in J$. If $\nu(\{x : x_i = 0\}) = 0$, then, by \eqref{eq:Thk2nu},
\begin{equation}
\label{eq:max}
\nu(\{ x \in \SzI : \exists j \in J, \, x_j > y_j \})
=
c_{i} \, \expec[ \max\{y_j^{-\alpha} \Theta_{i,j}^\alpha : j \in J\}].
\end{equation}
In contrast to equation~\eqref{eq:min}, equation~\eqref{eq:max} is true only when $\nu(\{x : x_i = 0\}) = 0$, a prerequisite for which Corollary~\ref{cor:justk} gives a necessary and sufficient condition.

By Corollary~\ref{cor:justk}, the case where $\expec[\Theta_{i,j}^\alpha] = c_{j}/c_{i}$ for all $j \in I$ leads to considerable simplifications. In fact, the special case $K = \{i\}$ in the next theorem implies that even the weak convergence of $\law(X/X_j \mid X_j>t)$ for $j \in I$ can then be deduced from the weak convergence of $\law(X/X_i \mid X_i>t)$ alone.

\begin{theorem}
	\label{thm:justK}
	In Theorem~\ref{thm:one2multi}, a sufficient condition for (a)--(e) to hold is that there exists a non-empty set $K \subset I$ with the following two properties:
	\begin{compactenum}[(i)]
		\item
		For every $i \in K$ we have $\law(X/X_i \mid X_i > t) \dto \law(\Theta_i)$ as $t \to \infty$ for some random vector $\Theta_i$ on $\SS$.
		\item
		For every $j \in I \setminus K$, there exists $i = i(j) \in K$ such that $\expec[ \Theta_{i,j}^\alpha ] = c_j / c_i$.
	\end{compactenum}
	In that case, also $\law(X/X_j \mid X_j > t) \dto \law(\Theta_j)$ as $t \to \infty$ for $j \in I \setminus K$, where the law of $\Theta_j$ is given in terms of the one of $\Theta_{i}$ with $i = i(j)$ via \eqref{eq:rcf2}.
\end{theorem}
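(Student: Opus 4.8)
The plan is to reduce the statement to condition~(d) of Theorem~\ref{thm:one2multi}: I will show that $b(t)\,\Pr(X/t \in \point)$ converges in $\Mzi$ for \emph{every} $i \in I$, after which the equivalences (a)--(e) and the connections (i)--(iv) of Theorem~\ref{thm:one2multi}, together with Corollary~\ref{cor:rcf}, deliver the conclusion at once. For $i \in K$ this comes for free: property~(i) is condition~(a) of Theorem~\ref{thm:one2multi} applied with the singleton index set $\{i\}$, so that theorem already produces $\nu_i \in \Mzi$ with $b(t)\,\Pr(X/t\in\point) \zto \nu_i$ and, by its part~(iv) (equation~\eqref{eq:Th2nu} with $I=\{i\}$, where the indicator $\1\{x_i>0\}$ is redundant on $\Szi$), the representation $\nu_i(g) = c_i\,\expec[\int_0^\infty g(z\Theta_i)\,\alpha z^{-\alpha-1}\,\diff z]$ for every Borel $g \ge 0$ on $\Szi$. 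So the real task is to verify condition~(d) at an index $j \in I \setminus K$; fix such a $j$, together with the corresponding $i = i(j) \in K$ satisfying $\expec[\Theta_{i,j}^\alpha] = c_j/c_i$.

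The first substantial step uses the moment identity to locate the mass of $\nu_i$ on the slab $\{x_j > \eta\}$ away from the coordinate hyperplane $\{x_i = 0\}$. Taking $g = \1\{x_j \ge \eta\}$ in the representation above gives $\nu_i(\{x \in \Szi : x_j \ge \eta\}) = c_i\eta^{-\alpha}\expec[\Theta_{i,j}^\alpha] = c_j\eta^{-\alpha} < \infty$ for all $\eta > 0$; since $\nu_i$ is carried by $\{x_i>0\}$, the same computation shows that its restriction $\nu_j$ to $\SS_{0,j}$, defined by $\nu_j(B) := \nu_i(B \cap \Szi)$, is a genuine element of $\mathcal{M}_{0,j}$. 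On the other hand, regular variation of $b$ together with $b(t)\Fbar_j(t)\to c_j$ gives $b(t)\,\Pr(X_j > \eta t)\to c_j\eta^{-\alpha}$, while the Portmanteau theorem for $\Mzi$-convergence \citep{lindskog+r+r:2014}, applied to the open, bounded-away-from-$\{x_i=0\}$ set $\{x_i>\eps,\,x_j>\eta\}$, yields $\liminf_t b(t)\,\Pr(X_i>\eps t,\,X_j>\eta t) \ge \nu_i(\{x_i>\eps,\,x_j>\eta\})$, and the latter increases to $\nu_i(\{x_j>\eta\}) = c_j\eta^{-\alpha}$ as $\eps\downarrow 0$. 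Combining the two displays produces the key estimate
\[
\lim_{\eps\downarrow 0}\ \limsup_{t\to\infty}\ b(t)\,\Pr\bigl(X_j > \eta t,\ X_i \le \eps t\bigr) = 0, \qquad \eta > 0,
\]
which is, for the pair $(i,j)$, the analogue of Assumption~\ref{ass:MT}(ii): a large value of $X_j$ forces $X_i$ to be of comparable order.

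Given this estimate, I would obtain condition~(d) at $j$ by a squeeze. Let $f \in \mathcal{C}_{0,j}$; decomposing into positive and negative parts we may assume $0 \le f \le 1$ and $f \equiv 0$ on $\{x_j \le \eta\}$ for some $\eta > 0$. For $\eps>0$ choose continuous $h^-_\eps,h^+_\eps$, depending only on the $i$-th coordinate, with $\1\{x_i\ge 2\eps\} \le h^-_\eps \le \1\{x_i>\eps\} \le h^+_\eps \le \1\{x_i>\eps/2\}$, so that $fh^\pm_\eps \in \Czi$ and $h^\pm_\eps \to \1\{x_i>0\}$ pointwise on $\Szi$. Then $b(t)\,\expec[fh^\pm_\eps(X/t)] \to \nu_i(fh^\pm_\eps)$, and dominated convergence (dominating function $f$, which is $\nu_i$-integrable because $\nu_i(\{x_j>\eta\})<\infty$) gives $\nu_i(fh^\pm_\eps)\to\nu_i(f)$ as $\eps\downarrow 0$. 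Splitting $b(t)\,\expec[f(X/t)] = b(t)\,\expec[f(X/t)\1\{X_i>\eps t\}] + b(t)\,\expec[f(X/t)\1\{X_i\le\eps t\}]$, the first term lies between $b(t)\,\expec[fh^-_\eps(X/t)]$ and $b(t)\,\expec[fh^+_\eps(X/t)]$, and the second is at most $b(t)\,\Pr(X_j>\eta t,\, X_i\le\eps t)$; letting $t\to\infty$ and then $\eps\downarrow 0$, the key estimate and the squeeze force $b(t)\,\expec[f(X/t)] \to \nu_i(f) = \nu_j(f)$. Hence $b(t)\,\Pr(X/t\in\point)\zto\nu_j$ in $\mathcal{M}_{0,j}$, i.e.\ condition~(d) holds at $j$.

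With (d) established for every $i \in I$, Theorem~\ref{thm:one2multi} supplies (a)--(e), and in particular $\law(X/X_j\mid X_j>t)\dto\law(\Theta_j)$ for all $j \in I$; since $\expec[\Theta_{i,j}^\alpha] = c_j/c_i$ for $i=i(j)$, Corollary~\ref{cor:rcf} then gives $\Pr[\Theta_{j,i}>0]=1$ and formula~\eqref{eq:rcf2}, expressing $\law(\Theta_j)$ through $\law(\Theta_i)$. The step I expect to be the main obstacle is the second paragraph — extracting from the moment identity that the $\{x_j>\eta\}$-mass of $\nu_i$ sits at positive distance from $\{x_i=0\}$, and converting this into the key estimate — since it is the only place that needs care with continuity sets and the Portmanteau theorem in the $\Mz$-topology; once the key estimate is in hand, the remaining steps are routine.
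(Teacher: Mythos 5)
Your proof is correct but takes a genuinely different route from the paper. The paper proves Theorem~\ref{thm:justK} by establishing statement~(a) of Theorem~\ref{thm:one2multi} directly for $j \in I \setminus K$: it lower-bounds $\Pr(X/X_j \in G \mid X_j > t)$ by $\Pr(X/X_j \in G, X_i > \delta t \mid X_j > t)$, rewrites the latter as a probability conditional on $\{X_i > \delta t\}$, invokes the known joint weak convergence $\law(X_i/s, X/X_i \mid X_i > s) \dto \Pa(\alpha)\otimes\law(\Theta_i)$, and lets $\delta \downarrow 0$ by monotone convergence; the moment identity $\expec[\Theta_{i,j}^\alpha] = c_j/c_i$ makes the lower bounds over all open $G$ add up to total mass one, which forces weak convergence by Portmanteau. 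You instead establish statement~(d) of Theorem~\ref{thm:one2multi} at $j$ by first isolating the ``key estimate'' $\lim_{\eps\downarrow 0}\limsup_{t\to\infty} b(t)\Pr(X_j > \eta t, X_i \le \eps t) = 0$ — an anti-clustering-type statement asserting that, at the $b(t)$-scale, large $X_j$ forces comparably large $X_i$ — and then squeeze $b(t)\expec[f(X/t)]$ between the $\nu_i$-integrals of truncated versions of $f$. Both routes pivot on exactly the same moment identity, and your key estimate is implicitly what makes the paper's lower bounds exhaust the probability; the paper's argument is shorter because it stays at the level of conditional distributions and needs only a one-sided bound, whereas yours is somewhat longer but makes the ``no escape of mass'' mechanism explicit and delivers the measure-level statement~(d) directly, an artefact that some readers may find clearer.

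One small remark on presentation: when you pass from $\nu_i(\{x_j \ge \eta\}) = c_j\eta^{-\alpha}$ to $\nu_i(\{x_j > \eta\}) = c_j\eta^{-\alpha}$ you should note that the former is continuous in $\eta$, so $\nu_i$ assigns no mass to $\{x_j = \eta\}$; and when you extend $f \in \mathcal{C}_{0,j}$ to $\Szi$ by zero on $\{x_j = 0\}$, continuity of the extension follows because $f$ already vanishes on a neighbourhood $\{0 < x_j \le \eta\}$. Both points are easy but worth a line.
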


The focus so far has been on weak limits of conditional distributions involving a high-threshold exceedance by a specific component. In the spirit of the multivariate peaks-over-thresholds methodology \citep{engelke+h:2018, kiriliouk+r+s+w:2018}, the following result covers, among other possibilities, the case where the conditioning event involves a high-threshold exceedance in at least one of a number of components.

\begin{theorem}
	\label{thm:MPD:rho}
	Suppose the conditions of Theorem~\ref{thm:one2multi} are fulfilled. Let $\rho : \SS \to [0, \infty)$ be continuous and homogeneous of order one, that is, $\rho(\lambda x) = \lambda \rho(x)$ for $\lambda \in [0, \infty)$ and $x \in \SS$. If $\SS_{\rho} :=\{ x \in \SzI : \rho(x) > 1 \}$ is contained in a set of the form $\{x : \max(x_I) > \eps \}$ for some $\eps > 0$ and if $\nu(\SS_{\rho}) > 0$, then $b(t) \Pr[\rho(X) > t] \to \nu(\SS_{\rho}) \in (0, \infty)$ as $t \to \infty$ and $\law(X/t \mid \rho(X) > t) \dto \nu(\point \cap \SS_{\rho})/\nu(\SS_{\rho})$ as $t \to \infty$. The conclusions of Theorem~\ref{thm:one2multi} thus apply to the random vector $(X, \rho(X))$ in the space $[0, \infty)^{d+1}$ and relative to the index set $I \cup \{d+1\}$.
\end{theorem}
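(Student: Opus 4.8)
I would record two elementary facts first. First, the scaled form of the standing hypothesis: from $\SS_\rho \subset \{x : \max(x_I) > \eps\}$ and homogeneity of $\rho$ one gets $\{x \in \SzI : \rho(x) > r\} = r\,\SS_\rho \subset \{x : \max(x_I) > r\eps\}$ (and likewise with $\ge$) for every $r > 0$, so every superlevel set of $\rho$ is bounded away from the cone $\{x : \max(x_I) = 0\}$ and thus has finite $\nu$-mass. Second, the same hypothesis together with continuity of $\rho$ forces $\{x \in \SS : \rho(x) > 1\} \subset \SzI$ (perturb any putative counterexample slightly in a coordinate $j \in I$), so that $\rho(X/t) = \rho(X)/t$ yields $\{X/t \in \SS_\rho\} = \{\rho(X) > t\}$ without further ado. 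Everything else is then read off from the $\Mz$-convergence $b(t)\,\Pr(X/t \in \point) \zto \nu$ supplied by Theorem~\ref{thm:one2multi}(e).

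Next I would verify that $\SS_\rho$ is a $\nu$-continuity set: it is open and bounded away from $\{x : \max(x_I) = 0\}$, and $\nu(\SS_\rho) \in (0,\infty)$ by the hypothesis $\nu(\SS_\rho) > 0$ together with the finiteness above; its boundary relative to $\SzI$ lies in $\{x : \rho(x) = 1\}$. That this level set is $\nu$-null follows from homogeneity of $\nu$ with index $-\alpha$ (\citep[Theorem~3.1]{lindskog+r+r:2014}): the map $r \mapsto \nu(\{x \in \SzI : \rho(x) > r\}) = r^{-\alpha}\,\nu(\SS_\rho)$ is finite and continuous on $(0,\infty)$, so continuity from above along $\{\rho > s\} \downarrow \{\rho \ge r\}$ forces $\nu(\{\rho = r\}) = 0$. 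Now the Portmanteau and mapping theorems for $\Mz$-convergence \citep{lindskog+r+r:2014} give at once $b(t)\,\Pr(\rho(X) > t) = b(t)\,\Pr(X/t \in \SS_\rho) \to \nu(\SS_\rho) \in (0,\infty)$, the first assertion. For the second, I would fix a bounded continuous $f : \SS \to \reals$ and write $\expec[f(X/t) \mid \rho(X) > t]$ as $b(t)\,\expec[f(X/t)\,\1\{X/t \in \SS_\rho\}]$ divided by $b(t)\,\Pr(\rho(X) > t)$: the numerator tends to $\int_{\SS_\rho} f \, \diff\nu$ because $x \mapsto f(x)\,\1\{x \in \SS_\rho\}$ is bounded, supported on $\overline{\SS_\rho}$ (still bounded away from $\{\max(x_I) = 0\}$) and continuous off the $\nu$-null set $\{\rho = 1\}$, while the denominator tends to $\nu(\SS_\rho)$; hence $\law(X/t \mid \rho(X) > t) \dto \nu(\point \cap \SS_\rho)/\nu(\SS_\rho)$.

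For the last assertion I would set $\tilde X = (X, \rho(X))$ in $[0,\infty)^{d+1}$, write $X_{d+1} = \rho(X)$ and $\tilde I = I \cup \{d+1\}$, and check the hypotheses of Theorem~\ref{thm:one2multi} for $\tilde X$ relative to $\tilde I$, with the same $b$ and $\alpha$. The marginal tail condition holds for $i \in I$ by assumption and for $i = d+1$ by the first assertion, with $c_{d+1} := \nu(\SS_\rho) \in (0,\infty)$. For condition~(e), I would introduce the continuous map $\phi : \SzI \to [0,\infty)^{d+1}$, $\phi(x) = (x, \rho(x))$, which satisfies $\phi(X/t) = \tilde X/t$ and, by the scaled hypothesis, maps $\SzI$ into $\SS_{0,\tilde I}$ while sending preimages of sets bounded away from $\{\tilde x : \max(\tilde x_{\tilde I}) = 0\}$ to sets bounded away from $\{x : \max(x_I) = 0\}$; letting $\tilde\nu$ be the pushforward of $\nu$ under $\phi$, the former makes $\tilde\nu$ a member of $\mathcal{M}_{0,\tilde I}$, and the latter makes $\tilde f \circ \phi$ a member of $\CzI$ for every $\tilde f \in \mathcal{C}_{0,\tilde I}$, so that $b(t)\,\expec[\tilde f(\tilde X/t)] = b(t)\,\expec[(\tilde f \circ \phi)(X/t)] \to \nu(\tilde f \circ \phi) = \tilde\nu(\tilde f)$ by condition~(e) for $X$. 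Hence $b(t)\,\Pr(\tilde X/t \in \point) \zto \tilde\nu$ in $\mathcal{M}_{0,\tilde I}$, Theorem~\ref{thm:one2multi} applies to $(\tilde X, \tilde I)$, and projecting its conclusions for the distinguished index $d+1$ onto the first $d$ coordinates gives back the first two assertions.

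The argument is short; the only part requiring care, and hence the main obstacle, is the $\Mz$-bookkeeping, i.e.\ making sure that the superlevel sets of $\rho$, the level set $\{\rho = 1\}$, the map $\phi$, and the test classes $\CzI$ and $\mathcal{C}_{0,\tilde I}$ all interact correctly with the ``bounded away from the forbidden cone'' conditions on $\SzI$ and $\SS_{0,\tilde I}$. Every such check reduces to the single scaled inclusion $\{x \in \SzI : \rho(x) \ge r\} \subset \{x : \max(x_I) \ge r\eps\}$ and to the homogeneity of $\nu$, both of which I would settle once and for all at the start.
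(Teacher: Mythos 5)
Your proof is correct and accomplishes the same three assertions, but it departs from the paper's proof in two substantive ways, both of which are worth noting.

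First, to show that the boundary of $\SS_\rho$ is $\nu$-null, you exploit homogeneity of $\nu$ directly: $\nu(\{ \rho > r \}) = r^{-\alpha}\,\nu(\SS_\rho)$ is a continuous, finite function of $r$, so $\nu(\{ \rho = r \}) = 0$ by continuity from above. The paper instead bounds $\nu(\{\rho = 1\})$ by $\sum_{i\in I}\nu(\{\rho=1,\,x_i>0\})$ and applies identity~\eqref{eq:Th2nu}, observing that $\int_0^\infty \1\{z\rho(\Theta_i)=1\}\,\alpha z^{-\alpha-1}\,\diff z = 0$ since the integrand's support is a single point. Your route is slicker and uses only homogeneity of the tail measure; the paper's is more in the spirit of the surrounding spectral-representation machinery. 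Both are sound.

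Second, for the extension to $(X,\rho(X))$, you verify Theorem~\ref{thm:one2multi}(e) directly by constructing the pushforward $\tilde\nu = \phi_*\nu$ under $\phi(x)=(x,\rho(x))$ and checking that composition with $\phi$ carries $\mathcal{C}_{0,\tilde I}$ back into $\CzI$. The paper instead notes that the already-established weak convergence $\law(X/t \mid \rho(X)>t) \dto \nu(\point\cap\SS_\rho)/\nu(\SS_\rho)$ furnishes statement~(c) for the enlarged vector at the new index $d+1$, implicitly leaving the reader to observe that (c) for the original indices $i\in I$ carries over by continuous mapping. Your pushforward argument is more self-contained and avoids having to treat the old and new indices separately, at the mild cost of some additional $\Mz$-bookkeeping. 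You are also more explicit than the paper about the preliminary but necessary fact that $\{\rho>1\}\subset\SzI$, which is needed for $\{X/t\in\SS_\rho\}=\{\rho(X)>t\}$ and which the paper passes over in silence (writing $\rho(X_I)$ as if $\rho$ only depended on the $I$-coordinates).
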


Examples of $\rho$ in Theorem~\ref{thm:MPD:rho} are $\rho(x) = \max_{i \in I} a_i x_i$ and $\rho(x) = \sum_{i \in I} a_i x_i$, where $a \in [0, \infty)^I \setminus \{0\}$. The special case $I = \{1, \ldots, d\}$ and $\rho(x) = \max(x_1,\ldots,x_d)$ produces multivariate Pareto distributions as in \citep[Section~6.3]{resnick:2006} and other references mentioned in Section~\ref{subsec:rv}. Also covered by Theorem~\ref{thm:MPD:rho} is $\rho(x) = \min_{i \in J} a_i x_i$ for non-empty $J \subset I$ and $a_j \in (0, \infty)$ for all $j \in J$ provided $\expec[ \min \{ a_j^\alpha \Theta_{i,j}^\alpha : j \in J \}] > 0$ for some (and hence all) $i \in I$. If, however, $\nu(\SS_{\rho}) = 0$, then $\Pr[\rho(X) > t]$ decays more rapidly than $b(t)$, and more refined models are needed, opening up a whole new world of possibilities.

\begin{example}
	\label{ex:maxlinear}
	Let $X = (X_1, \ldots, X_d)$ follow the max-linear model
	\begin{equation}
	\label{eq:maxlinear}
		\forall i \in \{1,\ldots,d\}, \qquad 
		X_i = \max_{r=1,\ldots,s} a_{i,r} Z_r,
	\end{equation}
	where $a_{i,r} \in [0, \infty)$ are scalars such that $\max_r a_{i,r} > 0$ for all $i$ and where $Z_1, \ldots, Z_s$ are independent and identically distributed nonnegative random variables whose common distribution function $F$ has a regularly varying tail function $\Fbar=1-F$ with index $-\alpha < 0$. The marginal tails satisfy $\Fbar_{i}(t) / \Fbar(t) \to \sum_r a_{i,r}^\alpha =: c_i$ as $t \to \infty$. If $X_i$ exceeds a large threshold $t \to \infty$, the probability that this was due to $Z_r$ is proportional to $a_{i,r}^\alpha$, and then the other factors $Z_{\bar{r}}$ for $\bar{r} \ne r$ are of smaller order than $Z_{r}$. It follows that (a) in Theorem~\ref{thm:one2multi} holds where the law of $\Theta_{i}$ is discrete with at most $s$ atoms and is given by
	\begin{equation}
	\label{eq:maxlinear:Theta}
		\law(\Theta_{i}) = \frac{1}{c_{i}} \sum_{r=1}^{s} a_{i,r}^\alpha \, \epsilon_{(a_{j,r}/a_{i,r})_{j=1}^d},
	\end{equation}
	with $\epsilon_x(\point)$ denoting a unit point mass at $x$. From \eqref{eq:maxlinear:Theta}, we find
	\begin{equation*}
		\expec[ \Theta_{i,j}^\alpha ] 
		= \frac{1}{c_{i}} \sum_{r=1}^s a_{j,r}^\alpha \1 \{ a_{i,r} > 0 \}.
	\end{equation*}
	It follows that $\expec[\Theta_{i,j}^\alpha] = c_j/c_i$ as soon as $a_{i,r} > 0$ for every $r$ such that $a_{j,r} > 0$. Indeed, in that case, if $X_{j}$ is large, then some variable $Z_r$ with $r = 1, \ldots, s$ such that $a_{j,r}$ is positive was large, which in turn implies that $X_{i}$ is large as well, so that $\Pr(\Theta_{j,i} > 0) = 1$.
\end{example}

\begin{example}
	Recursive max-linear models on directed acyclic graphs were introduced in \citep{gissibl+k:2018}. Borrowing some of their notation, consider a directed acyclic graph $\mathcal{D} = (V, E)$ with nodes $V = \{1, \ldots, d\}$ and edges $E = \{(k,i) : i \in V, k \in \pa(i)\}$, where $\pa(i) \subset V$ denotes the possibly empty set of parents of $i$. Consider a random vector $X = (X_1, \ldots, X_d)$ given by the structural equation model
	\begin{equation}
	\label{eq:SEM}
		\forall i \in \{1, \ldots, d\}, \qquad
		X_i = \max \left[ \max \{\gamma_{ki} X_k : k \in \pa(i)\}, \gamma_{ii} Z_i \right],
	\end{equation}
	where the random variables $Z_1, \ldots, Z_d$ are as in Example~\ref{ex:maxlinear} with $s = d$ and where all coefficients $\gamma_{ki}$ and $\gamma_{ii}$ are (strictly) positive; the maximum over the empty set is zero by convention. Then by \citep[Theorem~2.2]{gissibl+k+o:2018}, the random vector $X$ admits the max-linear representation
	\[
		\forall i \in \{1, \ldots, d\}, \qquad
		X_i = \max_{j=1,\ldots,d} b_{ji} Z_j,
	\]
	with coefficients $b_{ji}$, for $i,j \in \{1,\ldots,d\}$, defined as follows: $b_{ii} = \gamma_{ii}$ and $b_{ji} = 0$ if $j \in V \setminus (\an(i) \cup \{i\})$, while
	\begin{equation}
	\label{eq:DAG:maxlinear}
		\forall i \in \{1,\ldots,d\}, \forall j \in \an(i), \qquad
		b_{ji} = \max_{p \in P_{ji}} \left\{\gamma_{jj} \prod_{e \in p} \gamma_e\right\},
	\end{equation}
	where $P_{ji}$ is the collection of paths $p = \{e_1, \ldots, e_n\}$ from $j$ to $i$ in $\mathcal{D}$; recall the definition of a path in the beginning of Section~\ref{sec:tailtree}. The representation in \eqref{eq:DAG:maxlinear} is of the form \eqref{eq:maxlinear} with $s = d$ and with $a_{i,r} = b_{ri}$ for $i,r \in \{1, \ldots, d\}$. It follows that $a_{i,r} = 0$ unless $r = i$ or $r \in \an(i)$.
	
	The condition that $a_{i,r} > 0$ whenever $a_{j,r} > 0$ is satisfied as soon as $j \in \an(i)$: indeed, in that case, we have $\an(j) \cup \{j\} \subset \an(i)$, so that $b_{rj} > 0$ implies $r \in \an(j) \cup \{j\}$ and thus $r \in \an(i)$, implying $b_{ri} > 0$. Through the structural equation model~\eqref{eq:SEM}, a large value appearing at a node~$j$ will also be felt at any of its descendants~$i$. We get $\Pr(\Theta_{j,i} > 0) = 1$ for $j \in \an(i)$, which, by Corollary~\ref{cor:rcf}, means that the root-change formula~\eqref{eq:rcf2} applies, by which the law of $\Theta_j$ can be recovered from the one of $\Theta_i$. Moreover, Theorem~\ref{thm:justK} applies with $K$ equal to the set of leaf nodes, i.e., the nodes without descendants.
	
	In the special case that the directed acyclic graph is also a directed, rooted tree, every node~$i$ has either exactly one parent or is equal to the root node, say $u$. In that case, the collection of paths $P_{ji}$ between $j \in \an(i)$ and $i \in \{1,\ldots,d\} \setminus \{u\}$ is a singleton, $p = \path{j}{i}$, and the formula for $b_{ji}$ in~\eqref{eq:DAG:maxlinear} simplifies to $b_{ji} = \gamma_{jj} \prod_{e \in \path{j}{i}} \gamma_e$. Furthermore, the tail tree $\Theta_u$ in~\eqref{eq:maxlinear:Theta} starting from the root node $u$ simplifies to the degenerate distribution at the point $\theta_u = (\theta_{u,1}, \ldots, \theta_{u,d})$ with coordinates $\theta_{u,j} = \prod_{e \in \path{u}{j}} \gamma_e \in (0, \infty)$ for $j \in \{1,\ldots,d\}$. This is of the form in~\eqref{eq:Thetauv} with degenerate increments $M_e = \gamma_e$ for all $e \in E$.
\end{example}

\section{Regularly varying Markov trees}
\label{sec:rvmt}

As in Section~\ref{sec:tailtree}, let $(X, \tree)$ be a nonnegative Markov tree on the undirected tree $\tree = (V, E)$. The general theory in Section~\ref{sec:one2multi} sheds light on the relation between two tail trees emanating at different roots.
For two different nodes $u$ and $\bar{u}$ in $V$, the sets of directed edges $E_{u}$ and $E_{\bar{u}}$ are the same except for the edges connecting nodes on the path between $u$ and $\bar{u}$, which are directed in opposite ways in the two edge sets: For every $(a, b) \in \path{u}{\bar{u}} = E_u \setminus E_{\bar{u}}$, we have $(b, a) \in \path{\bar{u}}{u} = E_{\bar{u}} \setminus E_{u}$ and the other way around.

Condition~\ref{ass:MT} was formulated relative to a single root $u \in V$. The next condition covers all nodes $u \in U$ in a non-empty subset $U$ of $V$ as possible roots. For such $U$, let $E_U = \bigcup_{u \in U} E_u$ denote the set of directed edges that appear in at least one of the directed trees $E_u$.

\begin{condition}
	\label{ass:MT:all}
	There exists a non-empty $U \subset V$ with the following two properties:
	\begin{compactenum}[(i)]
		\item 
		For every $e = (a, b) \in E_U$, there exists a version of the conditional distribution of $X_b$ given $X_a$ and a probability measure $\mu_e$ on $[0, \infty)$ such that \eqref{eq:kernel:limit} holds.
		
		\item
		For every edge $e = (a, b) \in E_U$ for which there exists $u \in U$ such that $e \in E_u$ and an edge $\bar{e} \in \path{u}{a}$ such that $\mu_{\bar{e}}(\{0\}) > 0$, we have \eqref{eq:kernel:control}.
	\end{compactenum}
\end{condition}

If $u, v \in U$ in Condition~\ref{ass:MT:all} then every node $w \in V$ that is on the path between $u$ and $v$ can be added to $U$ and Condition~\ref{ass:MT:all} remains true. Indeed, for such $u,v,w$, we have $E_w \subset E_u \cup E_v$, which takes care of (i), and $\path{w}{a} \subset \path{u}{a} \cup \path{v}{a}$ for every node $a \in V$, which takes care of (ii). The author is grateful to an anonymous reviewer for having pointed this out.


\begin{corollary}
	\label{thm:rvmt}
	Let $(X, \tree)$ be a nonnegative Markov tree on the undirected tree $\tree = (V, E)$. Let $U \subset V$ be non-empty. Let $F_u(x) = \Pr(X_u \le x)$ and $\Fbar_u = 1-F_u$ for all $u \in U$. Assume that there exists a positive function $b$, regularly varying at infinity with index $\alpha > 0$, such that $b(t) \Fbar_u(t) \to c_u \in (0, \infty)$ as $t \to \infty$ for every $u \in U$. Assume that Condition~\ref{ass:MT:all} holds. Let $(M_e)_{e \in E_U}$ be a vector of independent random variables such that $M_e$ has law $\mu_e$ for each $e \in E_U$. Then all conclusions of Theorem~\ref{thm:one2multi} hold with $I = U$ and with $\Theta_{u}$ the tail tree in \eqref{eq:Thetauv} for $u \in U$.
\end{corollary}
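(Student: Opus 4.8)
The plan is to recognize Corollary~\ref{thm:rvmt} as a direct combination of Theorem~\ref{thm:MT}, Corollary~\ref{cor:MT} and Theorem~\ref{thm:one2multi}: I would show that the hypotheses reduce, root by root, to statement~(a) of Theorem~\ref{thm:one2multi} with index set $I = U$, after which all of that theorem's conclusions (and those of Corollaries~\ref{cor:modcons}, \ref{cor:rcf}, \ref{cor:justk}) transfer verbatim.

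First I would verify that Condition~\ref{ass:MT:all} relative to $U$ implies Condition~\ref{ass:MT} relative to each individual $u \in U$. For part~(i): since $E_u \subseteq E_U$, the kernel limits \eqref{eq:kernel:limit} guaranteed by Condition~\ref{ass:MT:all}(i) hold for every $e \in E_u$, which is exactly Condition~\ref{ass:MT}(i). For part~(ii): if $e = (a,b) \in E_u$ with $a \neq u$ and some $\bar e \in \path{u}{a}$ has $\mu_{\bar e}(\{0\}) > 0$, then $e \in E_U$ and the root $u$ itself witnesses the hypothesis of Condition~\ref{ass:MT:all}(ii), so \eqref{eq:kernel:control} holds; here one uses that $\path{u}{a} \subseteq E_u$, so that the quantifier over $\bar e$ in Condition~\ref{ass:MT}(ii) refers precisely to edges appearing in $E_u$, making the comparison exact. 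Consequently Theorem~\ref{thm:MT} applies with root $u$: taking the subfamily $(M_e : e \in E_u)$ of the given independent family $(M_e)_{e \in E_U}$, we obtain $\law(X/X_u \mid X_u = t) \dto \Theta_u$ as $t \to \infty$ with $\Theta_u$ given by \eqref{eq:Thetauv}, and Corollary~\ref{cor:MT} upgrades this to $\law(X/X_u \mid X_u > t) \dto \Theta_u$.

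Next I would match this against Theorem~\ref{thm:one2multi} with $I = U$. The marginal tail-balance hypothesis there — a regularly varying $b$ of index $\alpha>0$ with $b(t)\Fbar_i(t) \to c_i \in (0,\infty)$ — is exactly what is assumed here for $i \in U$. The previous paragraph establishes statement~(a) of Theorem~\ref{thm:one2multi} for every $i \in U$, with $\Theta_i$ equal to the tail tree of \eqref{eq:Thetauv}. Hence all of (a)--(e), the structural links (i)--(iv) between $Y_i$, $\nu_i$, $\nu$, $\Theta_i$, and the consequences in Corollaries~\ref{cor:modcons}, \ref{cor:rcf}, \ref{cor:justk} hold with $I = U$ and $\Theta_u$ the tail tree \eqref{eq:Thetauv}, which is the assertion.

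There is essentially no hard obstacle here — the statement is a corollary in the literal sense — but two points deserve a word of care. First, the various $\Theta_u$ are realised simultaneously on one probability space through the single independent family $(M_e)_{e \in E_U}$, whereas Theorem~\ref{thm:MT} only delivers, for each fixed $u$, the law of $\Theta_u$ from independent increments indexed by $E_u$; since $E_u \subseteq E_U$ and independence is inherited by subfamilies, each $\Theta_u$ so realised has exactly the required law, and the conclusions of Theorem~\ref{thm:one2multi} concern only these individual laws and their mutual consistency relations, which are automatic once (a) holds. Second, one should keep in mind that on the path $\path{u}{\bar u}$ the edge directions differ between $E_u$ and $E_{\bar u}$, so that $\Theta_u$ and $\Theta_{\bar u}$ are built from overlapping but distinct subfamilies of $(M_e)_{e\in E_U}$; this is harmless for the argument, since nothing beyond marginal correctness of each $\law(\Theta_u)$ is needed, and the root-change relation between them is then supplied by Corollary~\ref{cor:rcf}.
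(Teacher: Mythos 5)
Your proposal is correct and follows exactly the route of the paper's own (much terser) proof: Condition~\ref{ass:MT:all} yields Condition~\ref{ass:MT} for each root $u \in U$, so Theorem~\ref{thm:MT} and Corollary~\ref{cor:MT} establish statement~(a) of Theorem~\ref{thm:one2multi} with $I = U$, from which everything else follows. The extra care you take in checking the reduction of Condition~\ref{ass:MT:all}(ii) to Condition~\ref{ass:MT}(ii) and in noting that only the marginal laws of the $\Theta_u$ matter is accurate but not a departure from the paper's argument.
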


\begin{proof}
	Condition~\ref{ass:MT:all} and Corollary~\ref{cor:MT} imply that assumption~(a) in Theorem~\ref{thm:one2multi} is satisfied for $I = U$ and with $\Theta_u$ the tail tree in \eqref{eq:Thetauv}, for every $u \in U$. All equivalence relations and other properties are then as stated in Theorem~\ref{thm:one2multi}.
\end{proof}

\begin{corollary}
	In Corollary~\ref{thm:rvmt}, if $a, b \in V$ are neighbours in $E$ and if they both belong to $U$, then the distributions of $M_{a,b}$ and $M_{b,a}$ mutually determine each other by
		\begin{equation}
		\label{eq:muba}
			c_b \expec[ g(M_{b,a}) \, \1\{ M_{b,a} > 0 \}]
			=
			c_a \expec[ g(1/M_{a,b}) \, M_{a,b}^\alpha]
		\end{equation}
	for all Borel measurable $g : (0, \infty) \to [0, \infty]$.
\end{corollary}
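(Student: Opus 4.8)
The plan is to read off \eqref{eq:muba} as a one-coordinate specialization of the root-change formula in Corollary~\ref{cor:rcf}. By Corollary~\ref{thm:rvmt}, all conclusions of Theorem~\ref{thm:one2multi} hold with $I = U$ and with $\Theta_u$ the tail tree of \eqref{eq:Thetauv} for each $u \in U$; in particular Corollary~\ref{cor:rcf} is available for any two indices in $I = U$. Since $a$ and $b$ are neighbours in $E$, the unique path $\path{a}{b}$ consists of the single directed edge $(a,b) \in E_a \subset E_U$, so \eqref{eq:Thetauv} gives $\Theta_{a,b} = M_{a,b}$; symmetrically $\Theta_{b,a} = M_{b,a}$, and of course $\Theta_{a,a} = \Theta_{b,b} = 1$. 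Because $a, b \in U = I$, I would invoke \eqref{eq:rcf1} with $i = a$, $j = b$.

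Concretely, given Borel $g : (0,\infty) \to [0,\infty]$, extend it to a Borel function $\tilde g$ on $\SzI$ by $\tilde g(x) = g(x_a) \, \1\{x_a > 0\}$ (the value on $\{x_a = 0\}$ is immaterial below), and substitute $\tilde g$ into \eqref{eq:rcf1}. The left-hand side becomes $c_b \, \expec[\tilde g(\Theta_b) \, \1\{\Theta_{b,a} > 0\}] = c_b \, \expec[g(M_{b,a}) \, \1\{M_{b,a} > 0\}]$. On the right-hand side, $(\Theta_a/\Theta_{a,b})_a = \Theta_{a,a}/\Theta_{a,b} = 1/\Theta_{a,b}$ on $\{\Theta_{a,b} > 0\}$, while the factor $\Theta_{a,b}^\alpha$ annihilates the complementary event (exactly as in the proof of Corollary~\ref{cor:rcf}, where the indicator $\1\{\Theta_{i,j} > 0\}$ is noted to be redundant against $\Theta_{i,j}^\alpha$), so the right-hand side becomes $c_a \, \expec[g(1/M_{a,b}) \, M_{a,b}^\alpha]$. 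This is precisely \eqref{eq:muba}. The mutual-determination assertion is then the reading of \eqref{eq:muba} as an identity, valid for every $g$, between the two sub-probability measures $g \mapsto c_b \, \expec[g(M_{b,a})\1\{M_{b,a}>0\}]$ and $g \mapsto c_a \, \expec[g(1/M_{a,b}) M_{a,b}^\alpha]$ on $(0,\infty)$: each recovers the law of one increment on $(0,\infty)$ from that of the other (by pushforward under $t \mapsto 1/t$ and reweighting by $t^\alpha$), and the remaining atom at $0$ is pinned down since $\mu_e$ is a probability measure, e.g.\ via $g \equiv 1$, which gives $c_b \Pr(M_{b,a} > 0) = c_a \expec[M_{a,b}^\alpha]$.

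There is no substantial obstacle here: the entire analytic content sits in Corollary~\ref{cor:rcf}, and what remains is bookkeeping. The two points that warrant a word of care are (i) that $g$ is allowed to be $[0,\infty]$-valued whereas \eqref{eq:rcf1} is stated for $[0,\infty)$-valued test functions — this is handled by writing $g = \sup_n \min(g,n)$, applying the finite-valued case to each $\min(g,n)$ extended as above, and passing to the limit by monotone convergence on both sides of \eqref{eq:muba} (equivalently, one notes that \eqref{eq:modcons}, hence \eqref{eq:rcf1}, already holds for extended-nonnegative test functions because \eqref{eq:Th2nu} does); and (ii) the formal $0/0$ in $\Theta_a/\Theta_{a,b}$ on $\{\Theta_{a,b} = 0\}$, which is harmless because that event carries the weight $\Theta_{a,b}^\alpha = 0$ and likewise $M_{a,b}^\alpha = 0$, so the indicators $\1\{M_{b,a} > 0\}$ on the left and $\1\{M_{a,b}>0\}$ implicit on the right may be inserted or removed freely.
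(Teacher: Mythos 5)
Your proof is correct and takes essentially the same route as the paper: both reduce \eqref{eq:muba} to a specialization of the root-change formula \eqref{eq:rcf1} with $i=a$, $j=b$, noting that $\Theta_{a,b}=M_{a,b}$ and $\Theta_{b,a}=M_{b,a}$ for neighbouring $a,b$. The only cosmetic difference is that the paper first projects to the bivariate vector $(X_a,X_b)$ and applies \eqref{eq:rcf1} in the case $d=2$, whereas you apply it to the full vector with a test function $\tilde g(x)=g(x_a)\1\{x_a>0\}$ depending only on the $a$-coordinate; your care about $[0,\infty]$-valued $g$ and the $0/0$ on $\{\Theta_{a,b}=0\}$ is a welcome bit of extra explicitness.
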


\begin{proof}
	To find \eqref{eq:muba}, apply \eqref{eq:rcf1} to the case $d = 2$ and the random vector $(X_a, X_b)$. The two limit random vectors $\Theta_u$ in Theorem~\ref{thm:one2multi}(a) are $(1, M_{a,b})$ and $(M_{b,a}, 1)$ when conditioning on $X_a > t$ and on $X_b > t$, respectively. Equation~\eqref{eq:muba} implies $\Pr(M_{b,a} > z) = (c_a/c_b) \expec[ \1\{z M_{a,b} < 1\} \, M_{a,b}^\alpha]$ for all $z \in [0, \infty)$, so that the distribution of $M_{b,a}$ can be recovered from the one of $M_{a,b}$.
\end{proof}

For different roots $u, \bar{u} \in U$, the tail trees $\Theta_{u}$ and $\Theta_{\bar{u}}$ have the same multiplicative structure. The differences between their distributions lie in the starting nodes of the paths and in the distributions of the multiplicative increments for edges on the paths $\path{u}{\bar{u}}$ and $\path{\bar{u}}{u}$, since these edges change direction. For such edges of which the nodes belong to $U$ as well, the increment distributions are related by \eqref{eq:muba}. See Figure~\ref{fig:change} for an illustration.

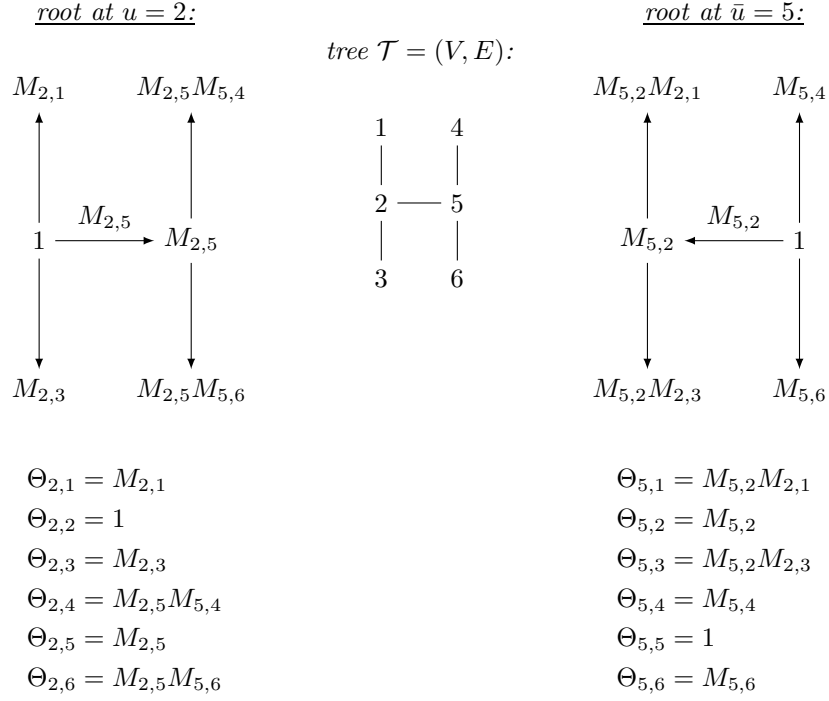
\begin{figure}
\begin{center}
	\begin{tikzpicture}
	\node (1) at (-0.5, 1.5) {$1$};
	\node (2) at (-0.5, 0.5) {$2$};
	\node (3) at (-0.5, -0.5) {$3$};
	\node (4) at (0.5, 1.5) {$4$};
	\node (5) at (0.5, 0.5) {$5$};
	\node (6) at (0.5, -0.5) {$6$};
	
	\draw (1) -- (2) -- (3);
	\draw (4) -- (5) -- (6);
	\draw (2) -- (5);
	
	\node at (0, 2.5) {\em tree $\mathcal{T}=(V, E)$:};
	
	\node (llu) at (-5,2) {$M_{2,1}$};
	\node (llc) at (-5,0) {$1$};
	\node (llb) at (-5,-2) {$M_{2,3}$};
	\node (lru) at (-3,2) {$M_{2,5} M_{5,4}$};
	\node (lrc) at (-3,0) {$M_{2,5}$};
	\node (lrb) at (-3,-2) {$M_{2,5} M_{5,6}$};
	
	\draw[->,>=latex] (llc) -- (llu);
	\draw[->,>=latex] (llc) -- (llb);
	\draw[->,>=latex] (lrc) -- (lru);
	\draw[->,>=latex] (lrc) -- (lrb);
	\draw[->,>=latex] (llc) -- (lrc) node[midway, above] {$M_{2,5}$};
	
	\node at (-4, 3) {\underline{\em root at $u = 2$:}};
	
	\node (rlu) at (3,2) {$M_{5,2} M_{2,1}$};
	\node (rlc) at (3,0) {$M_{5,2}$};
	\node (rlb) at (3,-2) {$M_{5,2} M_{2,3}$};
	\node (rru) at (5,2) {$M_{5,4}$};
	\node (rrc) at (5,0) {$1$};
	\node (rrb) at (5,-2) {$M_{5, 6}$};  
	
	\draw[->,>=latex] (rlc) -- (rlu);
	\draw[->,>=latex] (rlc) -- (rlb);
	\draw[->,>=latex] (rrc) -- (rru);
	\draw[->,>=latex] (rrc) -- (rrb);
	\draw[->,>=latex] (rrc) -- (rlc) node[midway, above] {$M_{5,2}$};
	
	\node at (4, 3) {\underline{\em root at $\bar{u} = 5$:}};
	\end{tikzpicture}
\end{center}
\begin{align*}
	\Theta_{2,1} &= M_{2,1} & \Theta_{5,1} &= M_{5,2} M_{2,1} \\
	\Theta_{2,2} &= 1 & \Theta_{5,2} &= M_{5,2} \\
	\Theta_{2,3} &= M_{2,3} & \Theta_{5,3} &= M_{5,2} M_{2,3} \\
	\Theta_{2,4} &= M_{2,5} M_{5,4} & \Theta_{5,4} &= M_{5,4} \\
	\Theta_{2,5} &= M_{2,5} & \Theta_{5,5} &= 1 \\
	\Theta_{2,6} &= M_{2,5} M_{5,6} \hspace{3cm} & \Theta_{5,6} &= M_{5,6}
\end{align*}
	\caption{\label{fig:change} The tail tree with root at node $u = 2$ (left) versus the tail tree with root at node $\bar{u} = 5$ (right) of the same Markov tree $X$ on the tree $\mathcal{T}$ depicted in the middle. The path $\path{u}{\bar{u}}$ is replaced by the path $\path{\bar{u}}{u}$ and the direction of any edge [here: just the edge $(2, 5)$] on the path between the two roots is reversed, while the other edges remain unaffected. For any edge $e = (a, b)$ that changes direction, the distribution of the increment $M_{a,b}$ changes to the one of the increment $M_{b,a}$. If both $a$ and $b$ belong to $U$, the increment distributions mutually determine each other through \eqref{eq:muba}.}
\end{figure}

For $u, \bar{u} \in U$, the equality $\expec[\Theta_{u,\bar{u}}^\alpha] = c_{\bar{u}}/c_{u}$ has interesting ramifications, see Corollaries~\ref{cor:rcf} and~\ref{cor:justk} and Theorem~\ref{thm:justK}. If all nodes on the path between $u$ and $\bar{u}$ belong to $U$ as well, then, since
\begin{equation}
\label{eq:Ma2Tha}
	\expec[ \Theta_{u,\bar{u}}^\alpha ] 
	= \prod_{e \in \path{u}{\bar{u}}} \expec[M_{e}^\alpha],
\end{equation} 
we have $\expec[\Theta_{u,\bar{u}}^\alpha] = c_{\bar{u}}/c_{u}$ as soon as $\expec[M_{a,b}^\alpha] = c_{b}/c_{a}$ for every $e = (a, b) \in \path{u}{\bar{u}}$.

Given the tree structure, the distribution of a Markov tree $X$ on $\tree = (V, E)$ is entirely determined by the bivariate distributions $(X_{a}, X_{b})$ for $e = (a, b) \in E$. Markov chains of which all pairs $(X_i, X_{i+1})$ are max-stable were proposed in \citep[Section~4.6]{coles+t:1991} and \citep{smith+t+c:1997}. When extended to trees, this construction method provides models meeting Condition~\ref{ass:MT:all}.

\begin{example}
	\label{ex:EVC:1}
	Let the distribution of the random pair $(X, Y)$ on $(0, \infty)^2$ be bivariate max-stable with cumulative distribution function
	\begin{equation*}
		F(x, y) = \exp \{ - (x^{-1}+y^{-1}) \, A(x/(x+y)) \}, \qquad
		(x, y) \in (0, \infty)^2,
	\end{equation*}
	where $A : [0, 1] \to [1/2, 1]$ is a Pickands dependence function, that is, a convex function such that $\max(w, 1-w) \le A(w) \le 1$ for all $w \in [0, 1]$; see \citep{gudendorf_extreme-value_2010} and the references therein. Both marginal distributions are unit-Fr\'echet, $F(z,\infty)=F(\infty,z)=\exp(-1/z)$ for $z \in (0, \infty)$. In particular, the marginal tail functions are regularly varying at infinity with index $-\alpha=-1$. 
	
	Let $A'$ be the left-hand derivative of $A$, which exists everywhere on $(0, 1]$, takes values between $-1$ and $1$, and is non-decreasing and continuous from the left; define $A'(0)$ as the right-hand limit. Since $A$ is convex, it is absolutely continuous, and the set of points in $(0, 1)$ where it is not continuously differentiable is at most countable. For $x, y \in (0, \infty)$ such that $A$ is differentiable at $w = x/(x+y)$, we have
	\[
		\Pr(Y \le y \mid X = x) 
		= \frac{\partial F(x, y) / \partial x}{\partial F(x, \infty) / \partial x}
		= \exp \{ - x^{-1} ((1 - w)^{-1} A(w) - 1)\} \, \{ A(w) - w \, A'(w) \}.
	\]
	It follows that $\law(Y/x \mid X = x) \dto M$ as $x \to \infty$, where
	\begin{equation}
	\label{eq:A2M}
		\Pr(M \le z) = A(w) - w \, A'(w), \qquad z \in [0, \infty), \ w = 1 / (1+z).
	\end{equation}
	This is part~(i) of Condition~\ref{ass:MT}. Further, equation~\eqref{eq:kernel:control} in part~(ii) of Condition~\ref{ass:MT} follows from the monotone regression dependence property of bivariate max-stable distributions established in \citep{gg:2000}, by which the supremum over $\eps$ in \eqref{eq:kernel:control} is attained in $\eps = \delta$ and the limit superior as $x \to \infty$ is bounded by $\Pr(M \ge \eta/\delta)$, which tends to $0$ as $\delta \downarrow 0$ for every fixed $\eta > 0$.
	
	This construction using bivariate max-stable distributions is in some sense generic. Given a random variable $M$ on $[0, \infty)$ with expectation $\expec(M) \le 1$, one can define a Pickands dependence function $A$ by $A(w) = 1 - \expec[\min(1-w, wM)]$ for $w \in [0, 1]$, and then \eqref{eq:A2M} holds. The extension to general exponents $\alpha$ and tail constants $c_u$ is straightforward.
\end{example}

\section{Absolutely continuous case}
\label{sec:ac}

If the joint distribution of the Markov tree $X = (X_v)_{v \in V}$ on $\tree = (V, E)$ is absolutely continuous with respect to the Lebesgue measure on $[0, \infty)^V$, the formulations of the conditions and results simplify considerably. Let $f$ denote the joint probability density function of $X$ and let $f_v$, for $v \in V$, denote the marginal density of $X_v$.




By the Hammersley--Clifford theorem \citep[Theorem~3.9]{LauritzenBook}, $X$ is a Markov tree as soon as the joint density factorizes as
\begin{equation*}
	\forall x \in \reals^d, \qquad
	f(x) = \prod_{j=1}^v f_v(x_v) 
	\prod_{\substack{\{a, b\} \subset V:\\\text{$a$ and $b$ are neighbours}}} \frac{f_{a,b}(x_{a}, x_{b})}{f_{a}(x_{a}) f_{b}(x_{b})}.
\end{equation*}
The second product is over all unordered pairs of neighbours and $f_{a,b}$ denotes the bivariate density function of $(X_a, X_b)$.

For $t \in (0, \infty)$ such that $f_a(t) \in (0,\infty)$, the density of $\law(X_b/t \mid X_a = t)$ is $t f_{a,b}(t, ty) / f_{a}(t)$ for $y \in (0, \infty)$. The following condition replaces Condition~\ref{ass:MT:all}.

\begin{condition}
	\label{ass:MT:pdf}
	For every $e = (a, b) \in E$, there exists a probability density function $q_{a,b}$ on $(0, \infty)$ such that
	\[
		\forall y \in (0, \infty), \qquad \lim_{t \to \infty} 
		\frac{t f_{a,b}(t, ty)}{f_{a}(t)} = q_{a,b}(y).
	\]
\end{condition}

\begin{theorem}
	Let the random vector $X$ on $[0, \infty)^V$ be a Markov tree on the undirected tree $\tree = (V, E)$ with joint density function $f$. Assume there exists a positive function $g$, regularly varying at infinity with index $-\alpha-1 < -1$, such that $f_v(t)/g(t) \to c_v \in (0, \infty)$ as $t \to \infty$ for every $v \in (0, \infty)$. If Condition~\ref{ass:MT:pdf} holds, then the conditions of Corollary~\ref{thm:rvmt} are satisfied with $U = V$, the same constants $c_u$, and auxiliary function $b(t) = \alpha / \{t \, g(t)\}$. For all pairs of neighbours $a, b \in V$, the density of $M_{a,b}$ is $q_{a,b}$ and for almost every $y \in (0, \infty)$, we have
	\begin{equation}
	\label{eq:quba}
	c_b \, y^\alpha \, q_{b,a}(y) = c_a \, y^{-2} \, q_{a,b}(y^{-1}).
	\end{equation}
	Moreover, $\expec[\Theta_{u,v}^\alpha] = c_{v}/c_{u}$ for all $u, v \in V$, so that $b(t) \Pr(X/t) \zto \nu$ as $t \to \infty$, where $\nu \in \Mz$ satisfies
	\[
		\nu(f) = c_u \expec \left[ 
			\int_{0}^{\infty} 
				f(z\Theta_{u}) \,
			\alpha z^{-\alpha-1} \, \diff z 
		\right]
	\]
	for every $u \in V$ and for every Borel measurable $f : [0, \infty)^V \setminus \{0\} \to [0, \infty]$, with $\Theta_{u}$ the tail tree in \eqref{eq:Thetauv}. Moreover, all tail trees are connected through \eqref{eq:rcf2}.
\end{theorem}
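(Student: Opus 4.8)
The plan is to deduce almost everything from Corollary~\ref{thm:rvmt} by verifying its hypotheses with $U = V$, and then to extract the refined statements from Corollaries~\ref{cor:rcf} and~\ref{cor:justk}.

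First I would check the marginal regular variation. As $g$ is regularly varying with index $-\alpha-1 < -1$, Karamata's theorem gives $\int_t^\infty g(s)\,\diff s \sim t\,g(t)/\alpha$, and combined with $f_v(t)/g(t) \to c_v$ this yields $\Fbar_v(t) = \int_t^\infty f_v(s)\,\diff s \sim c_v\, t\, g(t)/\alpha$ as $t \to \infty$. Since $b(t) = \alpha/\{t\,g(t)\}$ is regularly varying with index $\alpha$, it follows that $b(t)\Fbar_v(t) \to c_v \in (0,\infty)$ for every $v \in V$, which is the marginal part of the hypotheses of Corollary~\ref{thm:rvmt} with the stated $b$ and $c_u$.

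Next I would verify Condition~\ref{ass:MT:all} for $U = V$. For any directed edge $e = (a,b)$ and $t$ large enough that $f_a(t) \in (0,\infty)$, the conditional law $\law(X_b/t \mid X_a = t)$ has density $y \mapsto t\,f_{a,b}(t,ty)/f_a(t)$, which by Condition~\ref{ass:MT:pdf} converges pointwise to the probability density $q_{a,b}$; by Scheff\'e's lemma this forces convergence in total variation, so \eqref{eq:kernel:limit} holds with $\mu_e$ the law with density $q_{a,b}$, giving part~(i). Part~(ii) is vacuous, since every $\mu_e$ is absolutely continuous and hence $\mu_{\bar e}(\{0\}) = 0$ for all $\bar e$. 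Corollary~\ref{thm:rvmt} then applies with $I = U = V$: all conclusions of Theorem~\ref{thm:one2multi} hold, in particular $b(t)\Pr(X/t \in \point) \zto \nu$ for some $\nu \in \Mz$, and $M_e$ has law $\mu_e$, i.e.\ density $q_e$, which is the first asserted conclusion about the increments.

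It remains to prove \eqref{eq:quba}, the moment identities, and the representation of $\nu$. I would obtain \eqref{eq:quba} from \eqref{eq:muba}: as $M_{b,a}$ has density $q_{b,a}$ on $(0,\infty)$ the indicator there is a.s.\ $1$, and writing both sides as integrals against the respective densities and substituting $y \mapsto 1/y$ in the right-hand one gives $c_b\,y^\alpha q_{b,a}(y) = c_a\,y^{-2}q_{a,b}(y^{-1})$ for almost every $y$ (alternatively this follows directly from Condition~\ref{ass:MT:pdf} together with $f_v \sim c_v g$ and regular variation of $g$, via the substitution $t = sy$ in the defining limit for $q_{b,a}(1/y)$). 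Integrating \eqref{eq:quba} over $(0,\infty)$ and using $\int q_{a,b} = 1$ yields $\expec[M_{a,b}^\alpha] = c_b/c_a$ for every edge $(a,b)$; then \eqref{eq:Ma2Tha} gives $\expec[\Theta_{u,v}^\alpha] = \prod_{(a,b)\in\path{u}{v}} c_b/c_a = c_v/c_u$, a telescoping product along the path, for all $u,v \in V$. Finally, because $\expec[\Theta_{u,j}^\alpha] = c_j/c_u$ for all $j \in V$, Corollary~\ref{cor:justk} gives $\nu(\{x : x_u = 0\}) = 0$ and the representation $\nu(f) = c_u\,\expec[\int_0^\infty f(z\Theta_u)\,\alpha z^{-\alpha-1}\,\diff z]$ for each $u \in V$, and Corollary~\ref{cor:rcf} (whose hypothesis $\Pr[\Theta_{u,v}>0] = 1$ holds since $\expec[\Theta_{v,u}^\alpha] = c_u/c_v$) shows that all tail trees are tied together by \eqref{eq:rcf2}. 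The only steps needing real care are the Scheff\'e argument and the change-of-variables bookkeeping behind \eqref{eq:quba}; the rest is a direct appeal to Sections~\ref{sec:tailtree} and~\ref{sec:one2multi}.
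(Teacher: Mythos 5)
Your proposal is correct and follows essentially the same route as the paper: Karamata for the marginal tail constants, Scheff\'e for Condition~\ref{ass:MT:pdf}~$\Rightarrow$~Condition~\ref{ass:MT:all} with part~(ii) vacuous, \eqref{eq:muba} specialised to densities for \eqref{eq:quba}, the telescoping product via \eqref{eq:Ma2Tha}, and Corollaries~\ref{cor:justk} and~\ref{cor:rcf} for the representation of $\nu$ and the root-change formula. The only cosmetic difference is that you obtain $\expec[M_{a,b}^\alpha]=c_b/c_a$ by integrating \eqref{eq:quba} (with the roles of $a$ and $b$ interchanged) rather than by taking $g\equiv 1$ in \eqref{eq:muba} as the paper does; both computations are equivalent.
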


\begin{proof}
	The function $f_v$ is regularly varying at infinity with index $-\alpha-1$ too. By Karamata's theorem \citep[Proposition~1.5.10]{BGT}, we have $t f_v(t) / \Fbar_v(t) \to \alpha$ and thus $\Fbar_v(t) / \{t g(t)\} \to c_v/\alpha$ as $t \to \infty$.
	
	Condition~\ref{ass:MT:all} with $U = V$ follows from Condition~\ref{ass:MT:pdf} and Scheff\'e's theorem. Part~(ii) of Condition~\ref{ass:MT:all} is void, since $\mu_e(\{0\}) = 0$ for every $e \in E$.
	
	If $a, b \in V$ are neighbours, we can apply \eqref{eq:muba} to $g(y) = \1_{(0, z)}(y)$, where $z \in (0, \infty)$, to find
	\begin{equation*}
	c_b \, \int_0^z q_{b,a}(y) \, \diff y
	=
	c_a \, \int_{1/z}^\infty y^\alpha \, q_{a,b}(y) \, \diff y \\
	=
	c_a \, \int_{0}^{z} y^{-\alpha-2} \, q_{a,b}(y^{-1}) \, \diff y.
	\end{equation*}	
	Since this is true for every $z \in (0, \infty)$, we must have $c_b \, q_{b,a}(y) = c_{a} \, y^{-\alpha-2} \, q_{a,b}(y^{-1})$ for almost every $y \in (0, \infty)$, whence \eqref{eq:quba}.
	
	Since $\mu_{(b,a)}$ does not have an atom at $0$, the identity \eqref{eq:muba} with $g = \1_{(0, \infty)}$ implies that $\expec[M_{(a,b)}^\alpha] = c_{b}/c_{a}$. Apply \eqref{eq:Ma2Tha} and the observation on the line just below that equation to see that $\expec[\Theta_{u,v}^\alpha] = c_{v}/c_{u}$ for all $u, v \in V$. By Corollary~\ref{cor:rcf}, all tail trees are then connected via \eqref{eq:rcf2}.
	
	Finally, $\Mz$-convergence to $\nu$ with the stated expression follows from Theorem~\ref{thm:one2multi} and Corollary~\ref{cor:justk}.
\end{proof}

\begin{example}
	In Example~\ref{ex:EVC:1}, assume that $A$ is twice continuously differentiable on $(0, 1)$ and that $A'(0) = -1$ and $A'(1) = 1$. The distribution of $(X, Y)$ is then absolutely continuous and the conditional density of $Y/x$ given that $X = x$ converges as $x \to \infty$ to the function
	\[
		q(z) = w^3 \, A''(w), \qquad z \in (0, \infty), \ w = 1 / (1+z).
	\]
	The conditions on $A$ imply that $\int_0^\infty q(z) \, \diff z = \int_0^1 w \, A''(w) \, \diff w = 1$ and $\int_0^\infty z \, q(z) \, \diff z = \int_0^1 (1-w) \, A''(w) \, \diff w = 1$, so that $q$ is a probability density function with first moment equal to $1$. Moreover, replacing the function $A$ by the Pickands dependence function $w \mapsto A(1-w)$ amounts to changing $q$ by the function $z \mapsto z^{-3} q(z^{-1})$, in line with \eqref{eq:quba} with $c_a = c_b = 1$ and $\alpha = 1$.
	
%
	
	An interesting example in this respect is the bivariate H\"{u}sler--Reiss distribution \citep{husslerReiss1989} with Pickands dependence function
	\[
		A(w) = 
		(1-w) \, \Phi\left( \lambda + \tfrac{1}{2\lambda} \log \tfrac{1-w}{w} \right)
		+
		w \, \Phi\left( \lambda + \tfrac{1}{2\lambda} \log \tfrac{w}{1-w} \right)
	\]
	for $0 < w < 1$. Here $\lambda \in (0, \infty)$ is a parameter and $\Phi(z) = \int_{-\infty}^z (2\pi)^{-1/2} \exp(-z^2/2) \, \diff z$ is the standard normal cumulative distribution function. After tedious calculations, we find that $q$ is given by the density of the lognormal random variable $M = \exp\{2\lambda(Z-\lambda)\}$, where $Z$ is a standard normal random variable. Note indeed that $\expec[M] = 1$. Moreover, the density function satisfies $z \, q(z) = z^{-2} q(z^{-1})$ for all $z \in (0, \infty)$, which is \eqref{eq:quba} with $q_{a,b} = q_{b,a} = q$ and $c_a = c_b = 1$ and $\alpha = 1$. This also follows from the symmetry of the H\"usler--Reiss Pickands dependence function, i.e., $A(w) = A(1-w)$ for all $w \in [0, 1]$, so that the pair $(X, Y)$ is exchangeable.
	
	If all neighbouring pairs $(X_a, X_b)$ for $(a, b) \in E$ of the Markov tree follow such H\"{u}sler--Reiss max-stable distributions, the joint distribution of the tail tree is multivariate log-normal, since $\log \Theta_{u,v} = \sum_{e \in \path{u}{v}} \log M_e$ for all $u, v \in V$, where the random variables $\log M_e$ are independent and normally distributed with expectation $-2\lambda_e^2$ and variance $4\lambda_e^2$, with dependence parameter $\lambda_e \in (0, \infty)$ for all $e \in E$.
\end{example}

\appendix

\section{Proofs for Section~\ref{sec:one2multi}}
\label{app:proofs}

\begin{proof}[Proof of Theorem~\ref{thm:one2multi}]
	\emph{(a) and (b) are equivalent.} ---
	Clearly, (b) implies (a). Conversely, assume (a); let us show (b). Let $z \in [1, \infty)$ and let $\theta \in \SS$ be such that $\Pr(\Theta_j=\theta_j)=0$ for all $j \in \{1, \ldots, d\}$. We have
	\begin{align*}
	\Pr(X_i/t > z, X/X_i \le \theta \mid X_i > t)
	&=
	\frac{b(zt) \Fbar_i(zt)}{b(t) \Fbar_i(t)} \,
	\frac{b(t)}{b(zt)} \,
	\Pr(X/X_i \le \theta \mid X_i/t > z) \\
	&\to
	z^{-\alpha} \Pr(\Theta_i \le \theta),
	\qquad t \to \infty.
	\end{align*}
	It follows that $\Pr(X_i/t \le z, X/X_i \le \theta \mid X_i > t) \to \Pr(Z \le z) \Pr(\Theta_i \le \theta)$ as $t \to \infty$, where $Z$ is a $\Pa(\alpha)$ random variable.
	\smallskip
	
	\emph{(b) implies (c) and (i).} ---
	Since $(X/t) = (X_i/t) (X/X_i)$, statement~(b) and the continuous mapping theorem \citep[Theorem~2.3]{vandervaart:1998} imply that $\law(X/t \mid X_i > t)$ converges weakly to $Y_i = Z \Theta_i$, where $Z$ is a $\Pa(\alpha)$ random variable independent of $\Theta_i$. Since $\Theta_{i,i} = 1$ almost surely, we have $Y_{i,i} = Z$.
	\smallskip
	
	\emph{(c) implies (a).} ---
	Since $X/X_i = (X/t)/(X_i/t)$, statement (c) and the continuous mapping theorem imply statement (a) with $\Theta_i = Y_i / Y_{i,i}$.
	\smallskip
	
	\emph{(b) implies (d).} ---
	Define a Borel measure $\nu_i$ on $\Szi$ by
	\[
	\nu_i(\point)
	=
	c_i \int_0^\infty \Pr(z \Theta_i \in \point) \, \alpha z^{-\alpha-1} \, \diff z.
	\]
	If $B$ is a Borel subset of $\Szi$ contained in $\{ x \in \Szi : x_i \ge \eps \}$ for some $\eps > 0$, then $\Pr(z \Theta_i \in B) = 0$ as soon as $z < \eps$, since $\Theta_{i,i} = 1$ almost surely. As a consequence, $\nu_i(B) \le c_i \eps^{-\alpha}$ for such $B$. It follows that $\nu_i \in \Mzi$. 
	
	By linearity of the integral and by monotone convergence, we find that
	\begin{equation}
	\label{eq:Thetai2nui}
	\nu_i(f) = c_i \int_0^\infty \expec[f(z\Theta_i)] \, \alpha z^{-\alpha-1} \, \diff z
	\end{equation}
	for every nonnegative Borel measurable function $f$ on $\Szi$. The same expression is then true for real-valued Borel measurable functions $f$ on $\Szi$ for which at least one of the two integrals with $f$ replaced by $\abs{f}$ is finite. This includes bounded, Borel measurable functions that vanish on a set of the form $\{x \in \Szi : x_i \le \eps \}$ for some $\eps > 0$.
	
	Let $f \in \Czi$ and let $\eps > 0$ be such that $f(x) = 0$ as soon as $x_i \le \eps$. By (b), we have
	\begin{align*}
	b(t) \expec[ f(X/t) ]
	&=
	b(t) \expec[ f((X_i/t)(X/X_i)) \, \1(X_i/t > \eps)] \\
	&=
	b(\eps t) \Fbar_i(\eps t) \frac{b(t)}{b(\eps t)}
	\expec[ f(\eps(X_i/(\eps t))(X/X_i)) \mid X_i > \eps t] \\
	&\to
	c_i \eps^{-\alpha} \expec[f(\eps Z \Theta_i)],
	\qquad t \to \infty,
	\end{align*}
	where $Z$ is a $\Pa(\alpha)$ random variable, independent of $\Theta_i$. The limit is equal to
	\begin{equation*}
	c_i \eps^{-\alpha} \int_{1}^{\infty} \expec[f(\eps z \Theta_i)] \, \alpha z^{-\alpha-1} \, \diff z
	=
	c_i \int_{\eps}^{\infty} \expec[f(z \Theta_i)] \, \alpha z^{-\alpha-1} \, \diff z \\
	=
	\nu_i(f),
	\end{equation*}
	since $f(z \Theta_i) = 0$ almost surely whenever $z \le \eps$, as $\Theta_{i,i} = 1$ almost surely.
	\smallskip{}
	
	\emph{(d) implies (c).} ---
	For $z \in (0, \infty)$, we have $b(t) \Pr(X_i/t > z) 
	\to c_i z^{-\alpha}$ as $t \to \infty$, and thus $\nu_i(\{x : x_i > z\}) = c_i z^{-\alpha}$ by (d). For open $G \subset \reals^d$, the Portmanteau theorem \citep[Theorem~2.1(iii)]{lindskog+r+r:2014} yields
	\begin{align*}
	\liminf_{t \to \infty} \Pr(X/t \in G \mid X_i > t)
	&=
	\liminf_{t \to \infty} 
	\frac{1}{b(t) \Fbar_i(t)} b(t) \Pr(X/t \in G \cap \{ x : x_i > 1\}) \\
	&\ge
	c_i^{-1} \nu_i(G \cap \{ x : x_i > 1\})
	\end{align*}
	By the Portmanteau lemma for weak convergence \citep[Lemma~2.2]{vandervaart:1998} we obtain (c) where the law of $Y_i$ is $\Pr(Y_i \in \point) = c_i^{-1} \nu_i(\point \cap \{ x : x_i > 1 \})$.
	\smallskip
	
	\emph{(d) implies (e).} ---
	For every $z > 0$, we have
	\begin{align*}
	b(t) \Pr[\max(X_I)/t > z]
	&\le
	b(t) \sum_{i \in I} \Pr(X_i/t > z) \\
	&=
	\frac{b(t)}{b(zt)} 
	\sum_{i \in I} b(zt) \Fbar_i(zt)
	\to
	z^{-\alpha} \sum_{i \in I} c_i,
	\qquad t \to \infty.
	\end{align*}
	Since the limit is finite for every $z > 0$ and since it converges to zero as $z \to \infty$, it follows by the relative compactness criterion in \citep[Theorem~2.5]{lindskog+r+r:2014} that for every sequence $(t_n)_n$ tending to infinity, there exists a subsequence along which $b(t_n) \Pr(X/t_n \in \point)$ converges in $\MzI$. To show (e), we then need to show that these subsequence limits must coincide. To do so, we show that for every $f \in \CzI$, the limit of $b(t) \expec[f(X/t)]$ exists as $t \to \infty$. This fixes the value of the integral of such $f$ with respect to all subsequence limits, which then must be the same.
	
	For $\eps > 0$, let $h_\eps : [0, \infty) \to [0, 1]$ be the piece-wise linear function
	\[
	h_\eps(t) =
	\min\{ \max(2t/\eps - 1, 0), 1 \} =
	\begin{cases}
	0 & \text{if $t \in [0, \eps/2]$,} \\
	2t/\eps - 1 & \text{if $t \in [\eps/2, \eps]$,} \\
	1 & \text{if $t \in [\eps, \infty)$.}
	\end{cases}
	\]
	Put $\hbar_\eps = 1 - h_\eps$. Write $I = \{i_{1}, \ldots, i_{k}\}$. Then
	\begin{align*}
	1 &= h_\eps(x_{i_{1}}) + \hbar_\eps(x_{i_{1}}) \\
	&= h_\eps(x_{i_{1}}) 
	+ \hbar_\eps(x_{i_{1}}) h_\eps(x_{i_{2}}) 
	+ \hbar_\eps(x_{i_{1}}) \hbar_\eps(x_{i_{2}}) \\
	&= \ldots \\
	&= \sum_{\ell=1}^k \left( \prod_{m=1}^{\ell-1} \hbar_\eps(x_{i_{m}}) \right) h_\eps(x_{i_{\ell}})
	+ \prod_{\ell=1}^k \hbar_\eps(x_{i_{\ell}}).
	\end{align*}
	
	For $f \in \CzI$ we can find $\eps > 0$ such that $f(x) = 0$ if $\max(x_{i_{1}},\ldots,x_{i_{k}}) \le \eps$. Then $f(x) \prod_{\ell=1}^k \hbar_\eps(x_{i_{\ell}}) = 0$ for all $x$, and thus $f = \sum_{i \in I} f_i$ where, for $\ell \in \{1, \ldots, k\}$, we have
	\[
	f_{i_{\ell}}(x) = f(x) \left( \prod_{m=1}^{\ell-1} \hbar_\eps(x_{i_{m}}) \right) h_\eps(x_{i_{\ell}}).
	\]
	Each function $f_i$ belongs to $\CzI$ too but has moreover the property that $f_{i}(x) = 0$ as soon as $x_{i} \le \eps/2$. The restriction of $f_i$ to $\Szi$ thus belongs to $\Czi$. By (d),
	\begin{equation*}
	b(t) \expec[ f(X/t) ]
	=
	\sum_{i \in I} b(t) \expec[f_i(X/t)]
	\to
	\sum_{i \in I} \nu_{i}(f_{i}), \qquad t \to \infty.
	\end{equation*}
	The existence of a limit has thus been shown, and convergence in $\MzI$ to some measure $\nu$ as stated in (e) follows.
	\smallskip
	
	\emph{(e) implies (d), (ii), (iii) and (iv).} ---
	A function $f$ in $\Czi$ can be extended to a function in $\CzI$ denoted by the same symbol by putting $f(x) = 0$ for $x \in \SzI \setminus \Szi$. Hence, (e) implies (d), with $\nu_i$ as described in (ii).
	
	Statement (iii) follows from (ii) and the description of the law of $Y_i$ in terms of $\nu_i$ in the proof above of the implication that (d) implies (c).
	
	Similarly, (iv) follows from (ii), equation~\eqref{eq:Thetai2nui}, and Fubini's theorem.
\end{proof}

\begin{proof}[Proof of Theorem~\ref{thm:justK}]
	It is sufficient to show statement (a) in Theorem~\ref{thm:one2multi}. By property~(i) in Theorem~\ref{thm:justK}, the weak convergence in Theorem~\ref{thm:one2multi}(a) already holds for all $i \in K$, and we need to show that it also holds for all $j \in I \setminus K$. Choose $j \in I \setminus K$ and let $i = i(j) \in K$ be as in property~(ii) of Theorem~\ref{thm:justK}
	
	We will show that $\law(X/X_j \mid X_j > t)$ converges weakly as $t \to \infty$ to $\Theta_j$ whose law is defined in \eqref{eq:rcf2}. Let $G \subset \SS$ be open and let $\delta > 0$.
	%
	We have
	\begin{align*}
	\Pr(X/X_j \in G \mid X_j > t)
	&\ge
	\Pr(X/X_j \in G, X_i > \delta t \mid X_j > t) \\
	&=
	\frac{b(\delta t) \Fbar_i(\delta t)}{b(t) \Fbar_j(t)}
	\,
	\frac{b(t)}{b(\delta t)}
	\,
	\Pr\left[
	\frac{X/X_i}{X_j/X_i} \in G, \,
	\delta \frac{X_i}{\delta t} \frac{X_j}{X_i} > 1
	\, \Bigg\vert \,
	X_i > \delta t
	\right].
	\end{align*}
	By Theorem~\ref{thm:one2multi} applied to $K$, we have $\law(X_i/s, X/X_i \mid X_i > s) \dto \Pa(\alpha) \otimes \law(\Theta_{i})$ as $s \to \infty$. Let $Z$ be a $\Pa(\alpha)$ random variable, independent of $\Theta_{i}$. By the Portmanteau lemma for weak convergence, we have
	\begin{align*}
	\liminf_{t \to \infty} \Pr(X/X_j \in G \mid X_j > t)
	&\ge
	\frac{c_i}{c_j} \delta^{-\alpha}
	\Pr[ \Theta_i/\Theta_{i,j} \in G, \, \delta Z \Theta_{i,j} > 1 ] \\
	&= \expec[ \1 \{ \Theta_i / \Theta_{i,j} \in G \} \min( \Theta_{i,j}^\alpha, \delta^{-\alpha} ) ] / \expec[ \Theta_{i,j}^{\alpha} ].
	\end{align*}
	The equality on the second line follows from (ii) and the fact that $Z^{-\alpha}$ is uniformly distributed on $(0, 1)$ and independent of $\Theta_i$.
	Since $\delta > 0$ was arbitrary, the monotone convergence theorem yields $\liminf_{t \to \infty} \Pr(X/X_j \in G \mid X_j > t) \ge \expec[\1\{\Theta_{i}/\Theta_{i,j} \in G \} \, \Theta_{i,j}^\alpha] / \expec[\Theta_{i,j}^\alpha]$. 	Apply the Portmanteau lemma for weak convergence once more to obtain the stated weak convergence.
\end{proof}

\begin{proof}[Proof of Theorem~\ref{thm:MPD:rho}]
	The properties of $\rho$ imply that $\SS_{\rho}$ is open and non-empty and that $0 < \nu(\SS_{\rho}) < \infty$.	The boundary of $\SS_{\rho}$ is $\{ x \in \SzI : \rho(x_I) = 1 \}$, which is $\nu$-null set, since its $\nu$-measure is bounded by the sum over $i \in I$ of $\nu( \{ x \in \SzI : \rho(x_I) = 1, x_i > 0 \})$, which is zero by \eqref{eq:Th2nu}. Since $\rho(X_I) > t$ if and only if $X_I/t \in \SS_{\rho}$, the Portmanteau theorem \citep[Theorem~2.1(iv)]{lindskog+r+r:2014} implies $b(t) \Pr[\rho(X_I) > t] \to \nu(\SS_{\rho})$ as $t \to \infty$.
	
	Let $G \subset \reals^d$ be open. By (iii) of the same Portmanteau theorem, 
	\[
	\liminf_{t \to \infty} \Pr(X/t \in G \mid \rho(X_I) > t)
	=
	\liminf_{t \to \infty} 
	\frac%
	{b(t) \Pr(X/t \in G \cap \SS_{\rho})}%
	{b(t) \Pr(X \in \SS_{\rho})}
	\ge
	\nu(G \cap \SS_{\rho}) / \nu(\SS_{\rho}).
	\]
	The Portmanteau theorem for weak convergence implies the stated weak convergence of $\law(X/t \mid \rho(X) > t)$ as $t \to \infty$. This proves statement (c) in Theorem~\ref{thm:one2multi} for the enlarged random vector $(X, \rho(X))$.
\end{proof}

\acks

The author is grateful to two anonymous reviewers whose suggestions have led to various improvements throughout the text. The author also wishes to thank Stefka Asenova and Gildas Mazo for inspiring discussions.

\small
\bibliographystyle{apt}
\bibliography{biblio}

\end{document}